\newcommand{\N}{\mathbb N}
\newcommand{\R} {\mathbb R}
\newcommand{\resr}[1]{\raisebox{-1.2ex}{$\big| \raisebox{-0.3ex}{$#1$} $}}
 \newcommand{\sign}{\mbox{ sign}}
\newcommand{\lmg}{\Big\{ } \newcommand{\rmg}{\Big\} }
\newcommand{\meig}{\;\big| \;}
\newcommand{\menge}[2]{\lmg {#1}\meig{#2}\rmg}
\newcommand{\myref}[1]{{\rm (\ref{#1})}}
\newtheorem{thm}{Theorem}[section]
\newtheorem{cor}[thm]{Corollary}
\newtheorem{prop}[thm]{Proposition}
\begin{document}

\title*{Periodic Solutions of a Delay Differential Equation with a Periodic Multiplier}
\author{Anatoli Ivanov
\\
Bernhard Lani-Wayda \\ 
Sergiy Shelyag
}
\institute{A. Ivanov \at Pennsylvania State University, Dallas PA USA, \email{aivanov@psu.edu}
\and B. Lani-Wayda \at Justus-Liebig-Universit\"{a}t, Giessen, Germany, \email{Bernhard.Lani-Wayda@math.uni-giessen.de}
\and S. Shelyag \at Flinders University, Adelaide, Australia, \email{Sergiy.Shelyag@flinders.edu.au}
}
%
%
\maketitle

\abstract*{A simple non-autonomous scalar differential equation with delay, exponential decay,  nonlinear negative feedback and a 
periodic multiplicative coefficient is considered.  It is shown that stable slowly oscillating periodic solutions with the period of
the feedback coefficient, and also with the double period of the feedback coefficient exist.  The periodic solutions are built 
explicitly in the case of piecewise constant feedback function and the periodic  coefficient. The periodic dynamics are shown to 
persist under small perturbations of the equation which make it smooth. The results are confirmed and illustrated  by numerical 
simulations.}

\abstract{A simple non-autonomous scalar differential equation with delay, exponential decay,  nonlinear negative feedback and a 
periodic multiplicative coefficient   is considered.  It is shown that stable slowly oscillating periodic solutions with the period
of the feedback coefficient, and also with the double period of the feedback coefficient exist.  The periodic solutions are built 
explicitly in the case of piecewise constant feedback function and the periodic  coefficient. The periodic dynamics are shown to 
persist under small perturbations of the equation which make it smooth. The results are confirmed and illustrated  by numerical simulations.}

\section{Introduction}\label{Sect1}
The simple-looking  scalar delay differential equation
\begin{equation}\label{DDE0}
    x^\prime(t)=-\mu x(t)+f(x(t-\tau)),\quad f\in C^0(\mathbf{R}, \mathbf{R}),\; \mu\ge0,\; \tau>0
\end{equation}
is a paradigm for the rich and complex dynamics that such equations exhibit,  as well as for a wide range of applications in various fields of science, engineering, and other real world phenomena. Unlike the corresponding scalar   autonomous ordinary differential equation without  delay,  it can exhibit various dynamical behaviors such as sustained and periodic oscillations about equilibria, and complex (chaotic) dynamics. Some examples of these dynamics and further references can be found in e.g. \cite{AdHW,AdHM,GlaMac88,IvaSha91,L-W95,MacGla77,Pet83,Wal81b}. The broad spectrum of dynamical patterns and the intrinsic presence of delay in many applications make this equation a simple but adequate model to describe a range of various  processes in biological sciences. Among those are the well-known blood production and functioning models of Mackey-Glass \cite{MacGla77} and Lasota-Wazewska \cite{WazLas76}, population models of Nicholson's type \cite{GurBlyNis80,PerMalCou78}, other physiological and population dynamics models \cite{GlaMac88,Kua93}. The broad range of applications of this equation is described in multiple sources; here we mention a few books which focus on the applications as well as on the theoretical basics of such equations \cite{Ern09,Kua93,Smi11}. The fundamentals of the theory of this and more general delay differential equations can be found in e.g. monographs \cite{DieSvGSVLWal95,Hale77,HalSVL93}.

Extended models take  into account external influences --  in particular, periodic  factors that can be naturally present in some  components of the system,  
such as e.g. circadian rhythms, periodic medication intake, or  seasonal fluctuations in biological models. 
Such considerations  lead to non-autonomous delay differential equations with periodic time  dependence, and a special  case is the following  generalization of DDE (\ref{DDE0}):
\begin{equation}\label{DDE-per0}
    x^\prime(t)=-\mu(t) x(t)+a(t)f(x(t-\tau(t))),\quad f\in C^0(\mathbf{R}, \mathbf{R}),
\end{equation}
where all functions $\mu(t), a(t), \tau(t)$ are continuous and $T$-periodic. 
In this paper we focus on a special case of DDE (\ref{DDE-per0}) where the time delay $\tau$ and the decay rate $\mu$ are constants, while 
the delayed feedback input is periodic through the multiplicative coefficient $a(t)$ with the nonlinearity $f(x)$.

We note that there is a good number of publications  on scalar delay differential equations with periodic dependence on time in coefficients and delays. However, in most of them the time dependence is such that the corresponding equation does not possess a constant equilibrium any more; see e.g. \cite{Ai24,AmsIde13,Far17,FriWu92} and further references therein (our equation (\ref{DDE-per0}) always has the constant solution $x(t)\equiv0$). Some of those periodic equations do admit the constant (zero) solution; however, in those papers no distinction can be made between nontrivial periodic solutions and the existing zero solution; see e.g. \cite{LiKua01}.  Periodic solutions are frequently obtained from topological methods such as  Schauder's
fixed point theorem (see e.g. \cite{Far17}) or the 
Gaines-Mawhin coincidence degree (see e.g. \cite{LiKua01}), and do then not yield results on stability of the  periodic solutions. 
In previous papers, the periodic  influence  often appears in 
`additive'  form  like $... + g(t, x(t),x(t-r)$ (e.g., 
\cite{LiuYangGe05}), although 
multiplication with periodic coefficients has also been studied, e.g. in \cite{Far17} or \cite{JiangWeiZhang02}.  Recently, a partial case of equation (\ref{DDE-per0}), when $\mu\equiv0, \tau(t)=\tau_0 = $ constant,  was considered in \cite{IvaShe24,IvaShe23,IvaShe23-2}.

In the present paper we  focus on  a simple special  case of DDE (\ref{DDE-per0}) with the normalized delay $\tau=1$:
\begin{equation}\label{DDE-per}
    \dot x(t)=-\mu  x(t)+a(t)f(x(t-1)), 
\end{equation}
Here we can provide  detailed  information on the shape of solutions, 
criteria for existence of periodic motion, and their stability. 

In Section 2  we  explicitly  compute solutions for the case of piecewise constant $f$ and $a$, and derive conditions for existence and
stability/instability  of periodic solutions with the same period as the coefficient. In Section 3, a similar approach yields solutions 
with twice the coefficient period as their minimal period. 

Focusing on the single-period case, a smoothing procedure is described in Section 4. This  gives nearby equations within a  
$ C^1$-framework, preserving periodic solutions and their  stability or  instability.  
Such behavior is then robust under arbitrary  $C^1$-small perturbations.

\section{Explicit Periodic Solutions}\label{Sect3}
In this section we build explicit forward (for $t\ge0$) solutions to equation (\ref{DDE-per}), where the piecewise constant functions $f=f_0(x)$ and $a=A_0(t)$ defined as follows:
$$
f_0(x)=-\text{sign} (x)=\begin{cases}
    -1,\,&\; \text{if}\: x>0\\
    0,\,&\; \text{if}\: x=0\\
    +1,\,&\; \text{if}\: x<0,
\end{cases}
$$
and 
\begin{equation}\label{A0}
A_0(t) = \begin{cases}
a_1>0,\;\text{if}\; 0\le t < p_1 \\
a_2>0,\;\text{if}\; p_1\le t < p_1+p_2:=T\\
\text{periodic extension outside}\; [0,T)\;\text{for all}\; t\in\mathbb{R}.
\end{cases}
\end{equation}.

In this section the initial functions which are sign definite on the initial 
time interval $[-1,0]$ will  be most important.

Note that the solution to the initial value problem $\dot x (t)=-\mu x(t)+A, x(t_0)=x_0, A$-const, is given by
$x(t)=A/\mu+(x_0-A/\mu)\exp\{-\mu (t-t_0)\}$.

\subsection{Periodic solutions with the same period as the  coefficient $a$ }
\label{subsec3.1}

In this subsection we are dealing with slowly oscillating  solutions (i.e., 
the distance between consecutive zeroes is larger than the delay $1$) of the form as shown in Fig.~\ref{fig:single_period}. 
That is, for given $h>0,$ the first two zeros $t_1$ and $t_2$ of the solution satisfies $t_1+1<t_2<p_1$ and 
$x(t)<0,\forall t\in(t_1,t_2]$. In other words, the first negative semi-cycle of the solution $x=x(t,h)$ happens on the first constant 
segment $[0,p_1]$ of the coefficient $A(t)$. The solution remains positive on the interval $(t_2,T], T=p_1+p_2.$

\begin{figure}
    \centering
    \includegraphics[width=0.95\textwidth]{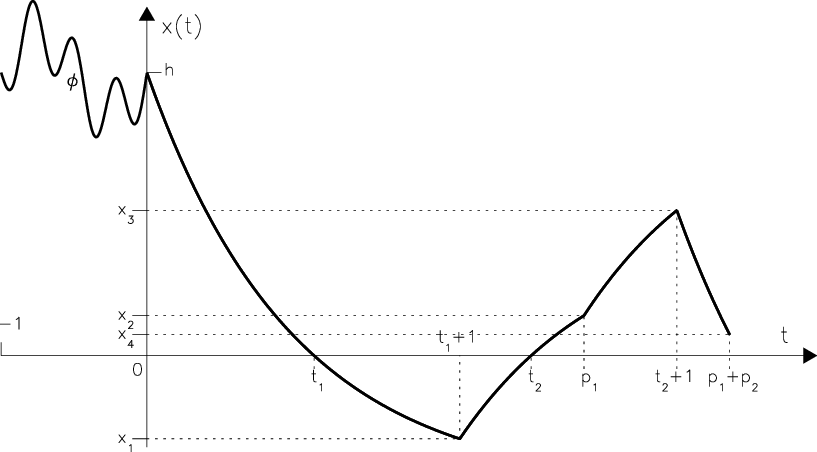}
    \caption{Solution of Equation (\ref{DDE-per}) with $x(p_1 + p_2) >0$.}
    \label{fig:single_period}
\end{figure}

Using the piecewise constant form of $f$ and $A$ with appropriate 
values on the interval $[0,p_1]$, one easily calculates the following results. 
$$
t_1=\frac{1}{\mu}\ln\frac{h\mu+a_1}{a_1},\quad x_1=x(t_1+1)=-\frac{a_1}{\mu}+\frac{a_1}{\mu}\exp\{-\mu\}<0.
$$
The second zero $t_2$ is calculated as 
$$
t_2=t_1+1+\frac{1}{\mu}\ln\frac{a_1-\mu x_1}{a_1}.
$$
The value $x_2$ is found from the condition $x_2:=x(p_1)$ as following
$$
x_2=x(p_1)=\frac{a_1}{\mu}+\left(x_1-\frac{a_1}{\mu}\right)\exp\{-\mu[p_1-(t_1+1)]\}>0.
$$
On the interval $[p_1,t_2+1]$ the value of $x_3$ at the endpoint is found as
$$
x_3=x(t_2+1)=\frac{a_2}{\mu}+\left(x_2-\frac{a_2}{\mu}\right)\exp\{-\mu(t_2+1-p_1)\}.
$$
Finally, the value of $x_4$ is defined as $x_4:=x(T)=x(p_1+p_2)$ and is found from integration on $[t_1+1, p_1+p_2]$ as
$$
x_4=x(p_1+p_2)=-\frac{a_2}{\mu}+\left(x_3+\frac{a_2}{\mu}\right)\exp\{-\mu[p_1+p_2-(t_2+1)]\}.
$$
In order for this solution to be slowly oscillating and of the desired shape we impose the following restrictions on the zeros $t_1, t_2$
and parametric values $a_1,a_2,p_1,p_2$: $0<t_1<t_1+1<t_2<p_1$ and $p_1-t_2<1$ (so that $p_1<t_2+1<p_1+p_2$) and $x_4>0$. These 
conditions are numerically  verified to be valid for  parameter values as given 
in Table~\ref{tab:soln_sp} below. 

\begin{table}[]
    \centering
    \begin{tabular}{|c|c|c|c|c|c|c|c|}
    \hline    
$~~~~~a_1~~~~~$ & $~~~~~a_2~~~~~$ & $~~~~~p_1~~~~~$ & $~~~~~p_2~~~~~$  & $~~~~~|m|~~~~~$ & $~~~~~b~~~~~$ 
& $~~~~~\mu~~~~~$ & $~~~~~T~~~~~$ \\
    \hline 
2	& 0.1	& 3	& 1	& 0.72 & 	1.76 & 0.1 &	4 \\    
2	&0.5	&3	&2.5&0.31 & 0.43 & 0.1&5.5	\\
3&	1&	3&	1&	0.21 & 1.69 & 0.1&	4\\	
3	& 0.1	& 3	& 7	& 0.41 &	1.03 & 0.1 &	10 \\
1	& 0.5	& 3	& 0.5	& 0.26 &	0.3 & 0.1 &	3.5 \\
4	& 1	& 3	&2.5	& 0.31&	0.87 & 0.1	& 5.5 \\
6	& 0.5	& 3	& 5	& 0.44 &	1.74 &	0.1 & 8 \\
6	& 2&	3&	1	&0.21	& 3.38 & 0.1 &	4 \\
1	 &0.17&	3	&0.5	&0.18 &	0.61 & 0.5 &	3.5 \\
1&	0.25&	3&	1&0.06 & 0.3 &	0.5 & 4	\\
2	& 0.1	& 3	& 0.5	& 0.33& 	1.43 & 0.5 &	3.5 \\
2	& 0.1	& 3	& 2.5	& 0.12 & 	0.4 & 0.5 &	5.5 \\
2	& 0.1	& 3	&3	& 0.1& 	0.27& 	0.1 & 6\\
2&	0.5&	3&	1& 0.06 & 0.6 &	0.5&	4\\
3&	1&	2&	1&	0.05 & 0.73 & 0.5&	3	\\
2&	1&	2&	1&	0.38 & 0.13 & 1&	3	\\
3&	0.25&	2&	1&	0.12 & 0.40 & 1 &3	\\
4	& 0.1	& 2	& 2.5	& 0.04 &	0.05 & 1 &	4.5\\
4	& 0.25	& 2	& 1	& 0.15 &	0.55&	1 & 3\\
4	& 3	& 2	& 1	&0.69 &	0.09 &	1 & 3\\
6	& 0.1	& 2& 	1	& 0.2 & 	0.87 & 1 &	3\\
6	& 0.1	&2	&3	&0.03 &	0.03& 1 &	5\\
    \hline
    \end{tabular}
    \caption{Examples of parameters, for which Equation~(\ref{DDE-per}) has a stable $T$-periodic solution.}
    \label{tab:soln_sp}
\end{table}

It is easily seen that the value of $x_4>0$ does not depend on the particular values of the initial function $\phi(s)$ on the interval 
$[-1,0)$; it is entirely determined by the positive value $\phi(0):=h>0$ only.

Introduce the one-dimensional map $F$ by $F: h\mapsto x_4.$ It is immediate that the fixed points $h_*$ of this map give rise to 
periodic solutions of equation (\ref{DDE-per}) with period $T=p_1+p_2.$
Namely, the initial state $\varphi$ of such a periodic solution is obtained 
by starting any solution $x$ with $x(0) = h_*$ and $x > 0$ on $[-1,0]$, and  then taking 
$\varphi := x_T =  x(p_1 + p_2 + \cdot )\resr{[-1,0]}
$.   
By using the above formulas for $x_1,x_2,x_3,x_4$ and $t_1, t_2$ and by an inductive calculation one arrives at the following expression 
for the map $F$:
\begin{equation}\label{F}
    x_4=F(h)=m\cdot h+b,
\end{equation}
where
\begin{equation}\label{m}
    m=\left[2\frac{a_2}{a_1}-\exp\{-\mu\}\right]\cdot\left(2-\exp\{-\mu\}\right) \exp\{-\mu(p_1+p_2-2)\},
\end{equation}
and 
\begin{align}\label{b}
    b=&\frac{2a_2-a_1\exp\{-\mu\}}{\mu}\left(2-\exp\{-\mu\}\right)\exp\{-\mu(p_1+p_2-2)\}\\
    &+\frac{a_1-a_2}{\mu}\exp\{-\mu p_2\}-\frac{a_2}{\mu}\nonumber
\end{align}

\begin{prop}\label{3.1}
    Suppose that the parameters $a_1, a_2, p_1, p_2$ are such that the inequalities $|m|<1$ and $b>0$ are satisfied. Then there exists a unique solution $h_*>0$ to the equation $F(h)=h$ which is given by the expression $h_*=b/(1-m)$, where $m$ and $b$ are defined by (\ref{m}) and (\ref{b}), respectively.
\end{prop}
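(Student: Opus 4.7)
The plan is essentially algebraic, since the derivation preceding the proposition has already reduced $F$ to the affine form $F(h)=m\cdot h+b$ with $m$ and $b$ given explicitly by (\ref{m}) and (\ref{b}). Thus the fixed point problem $F(h)=h$ is nothing but the scalar linear equation $(1-m)h = b$, and the entire task is to read off existence, uniqueness, and positivity from the two hypotheses $|m|<1$ and $b>0$.

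First I would note that the hypothesis $|m|<1$ gives in particular $1-m>0$, so the coefficient of $h$ in the linear equation $(1-m)h=b$ is strictly positive. This at once provides unique solvability: $h_* = b/(1-m)$ is the one and only solution of $F(h)=h$. Second, since $b>0$ by assumption and $1-m>0$, this unique $h_*$ satisfies $h_* > 0$, so it is admissible as an initial value $h=\varphi(0)$ in the setting of Subsection~\ref{subsec3.1}.

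Finally, I would remark that by the construction of $F$ above, a positive fixed point $h_*$ of $F$ indeed produces a $T$-periodic solution of (\ref{DDE-per}) via the prescription $\varphi := x_T = x(p_1+p_2+\cdot)\resr{[-1,0]}$ stated earlier, so the existence claim in the proposition fits the dynamical interpretation intended.

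There is essentially no obstacle in this argument: all of the work has already been done in computing the closed-form expressions (\ref{m}) and (\ref{b}) for the slope and intercept of $F$, and in checking (numerically, via Table~\ref{tab:soln_sp}) that the geometric conditions $0<t_1<t_1+1<t_2<p_1<t_2+1<p_1+p_2$ and $x_4>0$ required for $F$ to take this affine form are consistent with the parameter hypotheses. The proposition itself is then a one-line consequence of the linearity of $F$.
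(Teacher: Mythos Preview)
Your proposal is correct and matches the paper's treatment: the paper states Proposition~\ref{3.1} without proof, regarding it as immediate from the affine form $F(h)=mh+b$ derived just before, and your argument is precisely the one-line reading of the linear fixed-point equation under the hypotheses $|m|<1$ and $b>0$.
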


\begin{cor}
    Under the assumptions of Proposition \ref{3.1} DDE (\ref{DDE-per}) has a stable slowly oscillating periodic solution $x: \R  \to \R$ of period $T$. 
    Its initial state   $\varphi = x_0 $ is obtained, as described above,  e.g. by starting with the  constant initial function $\psi \equiv h_*$ and applying the period map. The function $-x$ is then also a periodic solution, as follows from oddness of the right hand side of the equation. 
\end{cor}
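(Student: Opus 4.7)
The plan is to lift the scalar fixed point $h_*=b/(1-m)$ of $F$ provided by Proposition~\ref{3.1} to a $T$-periodic solution of (\ref{DDE-per}), and to use the piecewise-constant structure of $f_0$ and $A_0$ to reduce the stability question to the scalar dynamics of $F$. I would begin with the constant initial function $\psi\equiv h_*$ on $[-1,0]$: the explicit formulas of this section produce a solution $x$ on $[0,T]$ with $x(T)=F(h_*)=h_*$, and the standing shape conditions $0<t_1<t_1+1<t_2<p_1<t_2+1<T$ force $x>0$ on $(t_2,T]\supset[T-1,T]$. Setting $\varphi:=x(T+\cdot)\resr{[-1,0]}$, one has $\varphi(0)=h_*$ and $\varphi>0$; by definition this is the image of $\psi$ under one application of the period map $P:C([-1,0],\R)\to C([-1,0],\R)$.

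The key observation---implicit in the derivation of~(\ref{F})---is that whenever $\phi$ is positive on $[-1,0]$ one has $f_0\circ\phi\equiv-1$, so the DDE on $[0,1]$ reduces to the linear ODE $\dot x=-\mu x-a_1$ with $x(0)=\phi(0)$; continuing by steps through the zeros $t_1(\phi(0))$ and $t_2(\phi(0))$ of the solution, one sees inductively that $x(\cdot;\phi)\resr{[0,T]}$ depends on $\phi$ only through $\phi(0)$. Applying this with $\phi=\psi$ and with $\phi=\varphi$---both positive and both taking value $h_*$ at $0$---gives $P(\varphi)=P(\psi)=\varphi$. The $T$-periodicity of $A_0$ then extends the solution starting from $\varphi$ to a $T$-periodic slowly oscillating solution $x$ on all of $\R$.

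For stability, I would factor $P$ locally through $F$. If $\phi$ is sufficiently close to $\varphi$ in the sup norm, then $\phi>0$ on $[-1,0]$, and by the preceding paragraph $P(\phi)$ depends only on $\phi(0)$; write $P(\phi)=g(\phi(0))$, where $g:\R\to C([-1,0],\R)$ is continuous (since the zero-crossing times and the solution pieces depend continuously on $\phi(0)$) and satisfies $g(h_*)=\varphi$. For $h$ near $h_*$, $g(h)$ inherits positivity on $[-1,0]$, so the iteration $P^n(\phi)=g(F^{n-1}(\phi(0)))$ for $n\ge 1$ is valid. Since $F(h)-h_*=m(h-h_*)$ with $|m|<1$, $F^{n-1}(\phi(0))\to h_*$ geometrically, and Lipschitz continuity of $g$ near $h_*$ yields $\|P^n(\phi)-\varphi\|_{C^0}\to 0$ at the same rate; Lyapunov stability follows from the same Lipschitz bound applied once.

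Finally, oddness of $f_0$ together with the absence of any affine (in $x$) forcing in~(\ref{DDE-per}) makes the equation equivariant under $x\mapsto -x$, so $-x$ is also a $T$-periodic solution. The only genuinely delicate point in this program is the persistence of the sign structure (and hence the formula for $F$) under small perturbations of $\phi(0)$; this follows from the strict inequalities in the shape conditions combined with continuous dependence of $t_1,t_2$ on $h$.
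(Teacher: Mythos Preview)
Your argument is correct and matches the paper's intent: the corollary is stated there without a separate proof, relying on the preceding observation that the forward solution depends only on $\phi(0)$, so that fixed points of the affine map $F$ lift to fixed points of the period map, with stability inherited from $|m|<1$. Your factorization $P^n(\phi)=g(F^{n-1}(\phi(0)))$ and the appeal to continuous dependence of the zero-crossing times on $h$ make explicit exactly what the paper leaves implicit (and what is later spelled out for the smoothed equation in Section~\ref{subsec44}).
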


\subsection{Periodic Solutions with  period  twice the  period  of the coefficient $a$}
\label{subsec3.2}

In this subsection we are dealing with slowly oscillating  solutions of the form as shown in Fig.~\ref{tab:soln_dp}. 
That is, for given $h>0,$ the first zero $t_1$ of the solution satisfies $t_1<p_1, p_1<t_1+1<T,$ and $x(t)<0, \forall t\in(t_1,T]$ 
with $T-p_1>1$. 
This will be guaranteed if one assumes that $x_2<0$ and $x_3<0$. Using the piecewise constant form of $f$ and $A$ with appropriate 
values on the interval $[0,p_1]$, one easily derives the following calculations.
$$
t_1=\frac{1}{\mu}\ln\frac{h\mu+a_1}{a_1},\quad x_1=x(p_1)=-\frac{a_1}{\mu}+\left(h+\frac{a_1}{\mu}\right)\exp\{-\mu p_1\}.
$$
On the interval $[p_1, t_1+1]$ one finds:
$$
x_2=x(t_1+1)=-\frac{a_2}{\mu}+\left(x_1+\frac{a_2}{\mu}\right)\exp\{-\mu(t_1+1-p_1)\}<0.
$$
On the interval $[t_1+1,p_1+p_2]$ the value of $x_3$ is found as
$$
x_3=x(p_1+p_2)=\frac{a_2}{\mu}+\left(x_2-\frac{a_2}{\mu}\right)\exp\{-\mu[p_1+p_2-(t_1+1)]\}.
$$
The value of $x_3<0$ does not depend on the particular values of the initial function $\phi(s)$ on the interval $[-1,0)$; 
it is entirely determined by the positive value $\phi(0)=h>0$ only.

\begin{figure}
    \centering
    \includegraphics[width=0.95\textwidth]{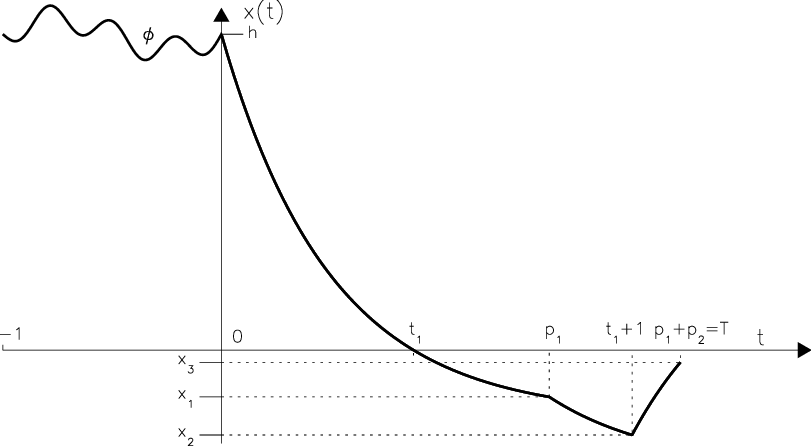}
    \caption{Solution of Equation (\ref{DDE-per}) with $x(p_1 + p_2) >0$.} 
    \label{fig:double_period}
\end{figure}

Introduce the one-dimensional map $\Phi_1$ by $\Phi_1: h\mapsto h_1:=x_3.$

Likewise, due to symmetry considerations, for every initial function $\psi$ satisfying $\psi(s)<0\; \forall s\in[-1,0]$ 
with $\psi(0):=h_1$ the corresponding solution $x=x(t,\psi), t\ge0,$ can be constructed with $x(p_1+p_2):=h_2>0$. 
This defines an analogous one-dimensional map $\Phi_2:h_1\mapsto h_2$ -- in fact, 
$\Phi_2(h) = -(\Phi_1 (-h))$

Fixed points $h_*$ of the composite one-dimensional map $\Phi=\Phi_2\circ\Phi_1$ give rise to slowly oscillating periodic solutions of
equation (\ref{DDE-per}). In particular, the positive solutions $h_*$ of the equation $\Phi_1(h)=-h, h>0,$ provide such fixed points of 
the map $\Phi$ (in general, there may exist additional fixed points of map $\Phi$ which are not solutions of this equation). 
We shall derive next some of the conditions on the parameters $a_1, a_2, p_1, p_2$ which guarantee the existence of the positive 
solutions to equation $\Phi_1(h)=-h$.

By using the recursive equations above for the values of $x_1, x_2$ and $x_3$ one finds that
\begin{equation}\label{x3}
    x_3=x(T)=x(p_1+p_2)=\Phi_1(h)=k\cdot h+d,
\end{equation}
where
\begin{equation}\label{k}
    k=\left(1-2\frac{a_2}{a_1}\exp\{\mu\}\right)\exp\{-\mu(p_1+p_2)\}
\end{equation}
and
\begin{equation}\label{d}
    d=\frac{a_2-a_1}{\mu}\exp\{-\mu p_2\}+\frac{a_1}{\mu}\exp\{-\mu (p_1+p_2)\}+\frac{a_2}{\mu}\left[1-2\exp\{-\mu (p_1+p_2-1)\}\right].
\end{equation}

\begin{prop}\label{3.2}
    Suppose that the parameters $a_1, a_2, p_1, p_2$ are such that the inequalities $|k|<1$ and $d<0$ are satisfied. Then there exists a unique solution $h_*>0$ to the equation $\Phi_1(h)+h=0$ which is given by the expression $h_*=-d/(m+1)$, where $k$ and $d$ are defined by (\ref{k}) and (\ref{d}), respectively.
\end{prop}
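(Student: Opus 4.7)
The proposition reduces immediately to an affine fixed-point computation, so my plan is to treat the statement as a direct corollary of the representation $\Phi_1(h) = kh + d$ recorded in equation (\ref{x3}). First I would substitute that representation into $\Phi_1(h) + h = 0$, obtaining the single linear equation $(k+1)\,h + d = 0$. The hypothesis $|k| < 1$ yields $k+1 \in (0,2)$, hence $k+1$ is strictly positive and in particular nonzero, so the equation has the unique real solution $h_{*} = -d/(k+1)$. Uniqueness therefore requires no further argument, and I would also flag the apparent typographical slip in the statement (the displayed formula writes $-d/(m+1)$, but no $m$ occurs in this subsection — the intended denominator is $k+1$).

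Second I would verify the sign claim. Since $k+1 > 0$ and the hypothesis $d < 0$ gives $-d > 0$, the quotient $h_{*} = -d/(k+1)$ is strictly positive, which is exactly what is asserted. Nothing further is needed at the level of the proposition itself; the fact that this $h_*$ actually produces a $2T$-periodic slowly oscillating solution of (\ref{DDE-per}) is a separate matter to be addressed by a corollary, analogous to the one following Proposition \ref{3.1}, together with the standing geometric assumptions $x_2 < 0$, $x_3 < 0$ and $t_1 < p_1 < t_1+1 < T$.

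The only substantive piece of work is the derivation of (\ref{x3}) itself, which the statement merely invokes. I would re-check this once more: the expressions for $x_1$, $x_2$, $x_3$ in the excerpt contain the factors $e^{-\mu(t_1+1-p_1)}$ and $e^{-\mu(p_1+p_2-t_1-1)}$, whose product is the $t_1$-independent quantity $e^{-\mu p_2}$; using $e^{-\mu t_1} = a_1/(h\mu+a_1)$ from the formula for $t_1$, the remaining isolated factor $e^{-\mu(p_1+p_2-t_1-1)}$ contributes $e^{-\mu(p_1+p_2-1)}(h\mu + a_1)/a_1$, which is itself affine in $h$. Consequently all nonlinear dependence on $h$ cancels and $x_3$ emerges as an affine function of $h$ with slope and intercept matching (\ref{k}) and (\ref{d}). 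This bookkeeping is the main (and very mild) obstacle; once it is in place, the proposition follows in one line.
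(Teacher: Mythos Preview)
Your proposal is correct and matches the paper's approach: the paper states Proposition \ref{3.2} without a separate proof, treating it as an immediate consequence of the affine representation $\Phi_1(h)=kh+d$ in (\ref{x3}) derived just before. Your identification of the typo ($m+1$ should read $k+1$) is also on point.
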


\begin{cor}
    Under the assumptions of Proposition \ref{3.2} DDE (\ref{DDE-per}) has a symmetric slowly periodic solution of period $2T$ satisfying
    $x(t+T)\equiv -x(t), \forall t\in\mathbb{R}$. As above, the solution can be obtained by starting with  the initial function $\varphi\equiv h_*$.
\end{cor}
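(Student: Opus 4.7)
The plan is to build the $2T$-periodic solution in two halves: first solve the DDE forward from the constant initial function $\psi\equiv h_*$ on $[-1,0]$ to get $x$ on $[0,T]$ ending at $x(T)=-h_*$, then extend $x$ to $[T,2T]$ by invoking the symmetry of the equation (oddness of $f_0$ and $T$-periodicity of $A_0$), and finally extend periodically. Applying the formulas of Section 3.2 with initial height $h_*=-d/(k+1)$ gives $x(T)=\Phi_1(h_*)=kh_*+d=-h_*$ immediately, since $h_*$ is the unique positive fixed point of $\Phi_1(h)+h=0$ by Proposition \ref{3.2}. I would first check that the shape assumptions of Section 3.2 (one zero $t_1\in(0,p_1)$, $x<0$ on $(t_1,T]$, $T-p_1>1$) hold when $h=h_*$; these reduce to the explicit sign conditions $x_2<0$ and $x_3<0$. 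The second, $x_3=-h_*<0$, is automatic; the first is a separate inequality on the parameters that must be added as an implicit hypothesis or verified from the standing assumptions.

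Next, define $\tilde{x}$ on $[T,2T]$ by $\tilde{x}(t):=-x(t-T)$. Continuity at $t=T$ holds because $-x(0)=-h_*=x(T)$. To verify the DDE on $[T,2T]$, differentiate to obtain $\dot{\tilde x}(t)=-\dot x(t-T)$, plug in the DDE for $x$ at time $t-T$, and rewrite the right hand side in terms of $\tilde x$. On the subinterval $t\in(T,T+1)$ the value $\tilde{x}(t-1)=x(t-1)$ is negative (by Step 1), while $x(t-T-1)=h_*>0$, so $f_0(\tilde{x}(t-1))=+1=-f_0(x(t-T-1))$. On $t\in(T+1,2T)$ we have $\tilde{x}(t-1)=-x(t-T-1)$, and again $f_0(\tilde x(t-1))=-f_0(x(t-T-1))$ by oddness of $f_0$. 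Combined with $A_0(t)=A_0(t-T)$, the right hand side equals $-\mu\tilde x(t)+A_0(t)f_0(\tilde x(t-1))$ in both cases.

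Finally, extend $\tilde{x}$ to all of $\mathbb{R}$ by $2T$-periodicity. Continuity at the seam is ensured by $\tilde{x}(2T)=-x(T)=h_*=\tilde{x}(0)$, and the DDE holds on all of $\mathbb{R}$ by the computation above. The relation $\tilde{x}(t+T)\equiv-\tilde{x}(t)$ is true by construction on $[0,T]$ and extends by $2T$-periodicity; since $\tilde x\not\equiv 0$, the minimal period cannot divide $T$, so it equals $2T$. The main obstacle is not the symmetry argument, which is routine, but the geometric verification in Step 1 that starting from $\psi\equiv h_*$ yields a solution of precisely the shape shown in Figure \ref{fig:double_period}; without the sign $x_2<0$ the affine formula $\Phi_1(h)=kh+d$ from Section 3.2 is not directly applicable to $h=h_*$.
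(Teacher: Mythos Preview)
Your argument is correct and matches the paper's (implicit) approach: the Corollary is stated without proof, relying on the preceding discussion that $\Phi_1(h_*)=-h_*$ together with the symmetry $\Phi_2(h)=-\Phi_1(-h)$ (i.e., oddness of $f_0$ and $T$-periodicity of $A_0$) makes $h_*$ a fixed point of $\Phi_2\circ\Phi_1$ and yields the anti-symmetry $x(t+T)=-x(t)$. Your observation that the shape condition $x_2<0$ is an additional implicit hypothesis is accurate—the paper handles it the same way, verifying it numerically for the parameter sets in Table~\ref{tab:soln_dp} rather than deducing it from $|k|<1$ and $d<0$; one small point: ``minimal period cannot divide $T$'' does not by itself force the minimal period to be $2T$, but counting that the constructed solution has exactly two sign changes on $[0,2T)$ immediately excludes any proper odd submultiple.
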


\begin{table}[]
    \centering
    \begin{tabular}{|c|c|c|c|c|c|c|c|}
    \hline    
$~~~~~a_1~~~~~$ & $~~~~~a_2~~~~~$ & $~~~~~p_1~~~~~$ & $~~~~~p_2~~~~~$ & 
$~~~~~|k|~~~~~$ & $~~~~~d~~~~~$ & 
$~~~~~\mu~~~~~$ & $~~~~~T~~~~~$ \\
    \hline 
1&	0.17&1&	4&0.38 &-0.09&	0.1&	10\\
2&	0.5&	1&	2.5& 0.32 & -0.38 &0.1&	7\\
2&	0.25&	1&	5&	0.40 & -0.17 & 0.1&	12\\
4	& 2	& 0.5	& 1	& 0.09 & 	-1.72 & 0.1 & 	3 \\
4 &	0.1 &	1	& 7	&  0.43& 	 -1.39 & 0.1 &	16 \\
6 & 0.25 & 0.5 & 5 & 0.52 & -0.95 & 0.1 & 11 \\
2&	0.5&0.5&1& 0.08 & -0.49 &0.5&	3\\
2&	0.25&	1&	2.5&0.10&-0.09&	0.5&	7\\
2&	0.17&	1&	3&	0.1 & -0.09 &0.5&	8\\
3&	0.25&	0.5&2.5& 0.16& -0.11 &	0.5&	6\\
4 &	0.1	& 1	& 5	& 0.05	& -0.08 & 0.5 &	12 \\
4	& 1	& 0.5	& 1	& 0.08 &	-0.98 &	 0.5&	3 \\
2	&0.5	&0.5	&1 & 0.08 &	-0.21&	1	& 3	\\
2	& 0.1	 & 1	& 2.5	& 0.02 &	-0.01 &	1 &	7 \\
3	& 0.17	& 0.5	& 1	& 0.16 & 	-0.41 & 1 &	3 \\
4	& 1	& 0.5	& 1	& 0.08 &	-0.42& 1	&	3 \\
4	& 0.1	& 1& 	3	& 0.02 &	-0.03& 	1&	8 \\
6	& 1	& 0.5	& 1	& 0.02 &	-0.71 & 	1 & 	3 \\
    \hline
    \end{tabular}
    \caption{Samples of parameters, for which Equation~(\ref{DDE-per}) has stable periodic solutions with double period $2T$.}
    \label{tab:soln_dp}
\end{table}

\section{Smoothing Nonlinearities}\label{Sec4}

We demonstrate how to get a smooth dynamical system close to the one generated by 
equation (\ref{DDE-per})  with the discontinuous coefficient function $a$ and $ f(x) = - \sign(x) $ 
from Section \ref{Sect3}. We focus on the case of the solutions with the same period as $ a$ here. 

 For given  functions $\tilde a, \tilde f: \R \to \R  $ and $\varphi \in C^0([-1,0], \R)$, 
we denote by $x^{\varphi}(\cdot , \tilde a, \tilde f)$ the solution to the equation 
$\dot x(t) = -\mu x(t) + \tilde a(t) \tilde f(x(t-1))$  with initial  state $x_0  =  x\resr{[-1,0]} =  \varphi$. 
As in previous sections, `solution' is to be understood in the sense that 
$x  = x^{\varphi}(\cdot , \tilde a, \tilde g)$ is continuous on an 
interval $[-1, t^+)$ and differentiable at all times $t \in [0, t^+) $  at which $a$ is continuous and where $x(t-1) \neq 0$,  and satisfies the equation for these 
$ t$. 
(At $t = 0$,  this refers to  right-side derivative). Since for our equations the forward solution depends only on 
$h = \varphi(0)$, we frequently also write $x^h(....)  $ instead of  $x^{\varphi}(....)$. 
Obviously such a solution satisfies  
\begin{equation} x(t) =  e^{-\mu(t-t_0) }\cdot x(t_0)  + \int_{t_0}^t e^{-\mu(t-s)} \tilde a(s) \tilde f(x(s-1)) \, ds   \quad (t \in [t_0, t^+)).  \label{varconst}   \end{equation}

\subsection{Preparations}
\label{subsec41} 

We  want to replace the step functions   $a$ and $ f = - \sign$ by nearby functions which are at least of class $C^1$,   preserving as much as possible of the original dynamics.
We need some technical preparation for  this: 

\begin{prop} \label{Prop41} Assume that  $\alpha, \beta  \in  \R$ with $\alpha \cdot \beta < 0$, that a function $ x$ satisfies $ x(t) = \alpha + \beta e^{-\mu t }$ on an open interval
$I $,  and $ x(t_0) = 0$ for some    $ t_0 \in I$.  Let  $\delta > 0$ be so small that $[-\delta, \delta] \subset x(I)$. 

\noindent Then, with  
$\displaystyle  \theta_+(\delta) :=   \frac{-1}{\mu}\ln( 1 - \frac{\delta}{\alpha}), \quad
 \theta_-(\delta) :=  - \frac{1}{\mu}\ln( 1 +\frac{\delta}{\alpha}), $ 
one has   $$  x(t_0 +\theta_-(\delta)) = -\delta,\;   x(t_0 + \theta_+(\delta)) = \delta, $$
and $ \theta_{\pm} (\delta) \to 0 $ as $ \delta \to 0$, locally uniformly w.r. to $ \alpha, \beta$  

\medskip \noindent b) For  a piecewise continuous function $\tilde f: \R\to \R$  one has  
$$ \int_{t_0 +\theta_-(\delta) +1}^{t_0 + \theta_+(\delta) +1} e^{-\mu[t_0 + \theta_+(\delta) +1 -s]}
 \tilde f(x(s-1)) \; ds =  J(\alpha,  \tilde f), \text{ with } $$  
$$ J(\alpha, g) :=  \frac{\alpha-\delta}{\mu} \int_{-\delta}^{\delta}  \frac{g(u)}{(\alpha - u)^2} \, du
\quad \text{  for }  g \in \mathcal{L}^1([-\delta, \delta), \R). $$ 
 (In particular, for fixed $ \delta$ and $ \mu$, the value of this integral depends  only on the limit value $\alpha$
of $\alpha + \beta e^{-\mu t } $ for  $ t \to \infty$, but not on $ \beta$.) 
\end{prop}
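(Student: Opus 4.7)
\medskip

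\noindent\textbf{Plan of proof.} The plan is to handle part a) by a direct computation that exploits the condition $x(t_0)=0$ to eliminate $\beta$ and reduce the exponential to a one-parameter form; then part b) will follow by the substitution $u = x(s-1)$, which turns the integral into one over $[-\delta,\delta]$ with a simple weight.

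For part a), I first observe that $x(t_0)=0$ yields $\beta e^{-\mu t_0} = -\alpha$, so for every $t \in I$ one can rewrite
\begin{equation*}
x(t) = \alpha + \beta e^{-\mu t} = \alpha\bigl(1 - e^{-\mu(t-t_0)}\bigr).
\end{equation*}
Then the equations $x(t_0 + \theta) = \pm \delta$ become $1 - e^{-\mu\theta} = \pm \delta/\alpha$, which one solves explicitly for $\theta$ and recovers precisely the formulas for $\theta_+(\delta)$ and $\theta_-(\delta)$ stated in the proposition. The smallness assumption $[-\delta,\delta]\subset x(I)$ guarantees that the arguments of the logarithms lie in the domain of $\ln$. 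The limit $\theta_\pm(\delta)\to 0$ as $\delta\to 0$ is immediate from continuity of $\ln$ at $1$, and the dependence on $\alpha,\beta$ enters only through $\alpha$ in a continuous way, so the convergence is locally uniform on compact subsets of $\{\alpha\neq 0\}\times\R$.

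For part b), the natural change of variable is $u := x(s-1) = \alpha(1 - e^{-\mu(s-1-t_0)})$. This gives
\begin{equation*}
e^{-\mu(s-1-t_0)} = \frac{\alpha - u}{\alpha}, \qquad du = \mu(\alpha - u)\, ds,
\end{equation*}
and, using $e^{-\mu\theta_+(\delta)} = 1 - \delta/\alpha = (\alpha-\delta)/\alpha$, the weight factor rewrites as
\begin{equation*}
e^{-\mu[t_0 + \theta_+(\delta) + 1 - s]} = e^{-\mu\theta_+(\delta)}\, e^{\mu(s-1-t_0)} = \frac{\alpha-\delta}{\alpha}\cdot\frac{\alpha}{\alpha - u} = \frac{\alpha-\delta}{\alpha - u}.
\end{equation*}
The integration limits transform via the defining properties from part a): $s = t_0 + \theta_-(\delta) + 1$ gives $u = -\delta$, and $s = t_0 + \theta_+(\delta) + 1$ gives $u = \delta$. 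Substituting everything yields exactly $J(\alpha,\tilde f)$. The condition $\alpha\beta<0$ (combined with $[-\delta,\delta]\subset x(I)$) ensures that $\alpha - u$ stays bounded away from zero on the range of integration, so the substitution is valid.

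The only mildly delicate point is the bookkeeping in part b) — specifically verifying that the weight factor $e^{-\mu[t_0+\theta_+(\delta)+1-s]}$ transforms to $(\alpha-\delta)/(\alpha-u)$ rather than to something depending on $\beta$. This cancellation is exactly what produces the stated dependence of $J$ on $\alpha$ alone, and it is the computational heart of the proposition, since it is what will later allow the smoothing procedure to be applied uniformly in $\beta$.
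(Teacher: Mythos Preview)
Your proof is correct and follows essentially the same approach as the paper: direct algebra using $\beta e^{-\mu t_0}=-\alpha$ for part a), and the substitution $u=x(s-1)$ for part b). The only cosmetic difference is that you eliminate $\beta$ at the outset via the reparametrization $x(t)=\alpha(1-e^{-\mu(t-t_0)})$, whereas the paper carries $\beta$ through the substitution (writing $e^{\mu s}=\beta/(x(s)-\alpha)$ and $\dot x(s)=-\mu(x(s)-\alpha)$) and lets it cancel at the end; both routes arrive at the same formula by the same mechanism.
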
 
\begin{proof} 
(For the proof,  we omit the $\delta-$dependence of $ \theta_{\pm} $ in the  notation.) 

Ad a): $\theta_+ $ is characterized by the equation $\beta\cdot [e^{-\mu(t_0 + \theta_+)} - e^{-\mu t_0}] = \delta$, and 
$\beta  e^{-\mu t_0}  = - \alpha$ gives:  $  -\alpha [e^{-\mu \theta_+} - 1]  = \delta $, or 
$$\theta_+ =  \frac{-1}{\mu}\ln( 1 - \frac{\delta}{\alpha}).$$ The second formula is obtained analogously, and the convergence result is obvious from these formulas.

\medskip Ad b): Note that  $e^{\mu s} = \beta/(x(s) - \alpha) \;(s \in I)$, in particular, 
$$e^{-\mu (t_0 + \theta_+)} = (x( t_0 + \theta_+) -\alpha)/\beta = (\delta - \alpha)/\beta.$$
Note also that $\dot x(s) = -\mu(x(s) - \alpha)\;  (s \in I)$. 
  Using  these identities  and the substitution rule,   one sees that the integral in question equals 
$$ \begin{aligned} & \frac{\delta - \alpha}{\beta}  \int_{t_0 + \theta_-}^{t_0 + \theta_+} e^{\mu s}  \tilde f(x(s))\, ds    =  \frac{(\delta - \alpha)}{\beta} \int_{t_0 + \theta_-}^{t_0 + \theta_+} \frac{\beta}{x(s) - \alpha} 
\tilde f(x(s)) \cdot \frac{\dot x(s)}{-\mu (x(s) -\alpha)  } \, ds \\
&=   \frac{ (\delta - \alpha)}{\beta} \int_{-\delta}^{\delta}   
 \frac{\beta}{-\mu (u - \alpha)^2} \tilde f(u) \, du = 
\frac{\alpha-\delta}{\mu} \int_{-\delta}^{\delta}  \frac{\tilde f (u)}{(\alpha  - u)^2} \, du = J(\alpha, \tilde f).
\end{aligned} 
 $$
\end{proof} 

The main idea of our smoothing procedure is not at all new, but present in previous publications; 
  see \cite{AdHW}, \cite{AdHM}, \cite{Pet83}, \cite{Wal81b},  and \cite{HaleLin} for papers using step functions  and (except for \cite{AdHM} and \cite{Pet83}) smoothed versions of these as nonlinearities of delay equations. 
The most important point in these considerations is  mentioned in \cite{HaleLin} on the top of p. 696: The values of the solution $x$ at times  $t_0$ where its   derivative has jump discontinuities 
(which arise as a consequence of the discontinuities of $f$ or $a$ in our situation) have to be a minimum distance away from the discontinuity points of points  of  the step function $f$  (in our case, only zero), so that 
$f$  and also the smooth version $ \tilde f$  of $f$  are constant in a neighborhood of these values. 
Then   the  (constant) values  of $f(x(s)) $ and of $ \tilde f(\tilde x(s))$, where the  tilde indicates smoothed nonlinearity and corresponding solution,  coincide for $s$ in a neighborhood of the time  $ t_0$ where $\dot x $ has a jump  discontinuity.

\subsection{Smoothing $f = - \sign$ to $\tilde f$}
\label{subsec42} 
Consider solutions  $ x = x^h$ starting with $x(0) = h $ as  described in Section \ref{subsec3.1}, with zeroes at 
$t_1$ and $t_2 $ (depending on $h$), for $ h$-values close to the value $  h_*$ 
corresponding to the periodic solution as constructed in Section \ref{subsec3.1}. 
For small enough $\delta >0$, there exist the numbers 
 $ \theta_{\pm} = \theta_{\pm}(\delta) $ associated to the first crossing of zero at $t_1$ as in part a) of Proposition \ref{Prop41}, and analogously $\vartheta_{\pm} $  associated to the second zero at $t_2$. 
(Note that then $\theta_- > 0 > \theta_+$ and $\vartheta_- < 0 < \vartheta_+$.)

\begin{prop} \label{Prop42} For $\delta > 0 $ sufficiently small, there exists a $C^{\infty} $ function 
$\tilde f: \R \to \R$ with 
\begin{equation} \tilde f(x) = f(x) = -\text{\rm sign}(x) \text{ if } |x| \geq \delta, \text{ and }  -1 \leq \tilde f(x) \leq 1 \text{ if }  |x| \leq \delta,  \label{smoothf} \end{equation} 
and such that the solutions 
$\tilde x =  x^{\varphi}(\cdot,a, \tilde f) $ and 
$x =  x^{\varphi}(\cdot,a, \tilde f) $ (both still with the discontinuous coefficient $a$)  
 with $ \varphi(0) = h$, where $h$ is close to $h_*$,   have the following properties: 

\medskip \noindent a) Let  $ \theta_{\pm} $ and $\vartheta_{\pm} $ be as explained above. Then
(with the $h$-dependent times $t_1$ and $t_2$), 
 $$\tilde x = x   \quad \text{ on  }   [-1, t_2 + 1+ \vartheta_-] \setminus [t_1+ 1 + \theta_+, \; t_1+ 1+\theta_-].$$

\medskip \noindent b) There exists a constant $ C >0$ such that on the`exceptional'interval\\  $[t_1+ 1+\theta_+,\;  t_1+ 1+\theta_-]$ one has   $|\tilde x(t) - x(t) | \leq C\delta,  $
and also 

\medskip \noindent c)  $  |\tilde x - x|   \leq C\delta$ on $  [t_2 + 1 + \vartheta_-, t_2 + 1 + \vartheta_+]$.  

\medskip \noindent d) On $[t_2 + 1 + \vartheta_+, p_1+p_2]$ one has  for  sufficiently small values of $\delta $ 
and  of $ |h - h_*|$: 
$$\tilde x(t) - x(t)  = 
 e^{-\mu[t-(t_2 + \vartheta_+ + 1)]}R(\delta),  $$
where $R(\delta) :=  a_2 \cdot  J(a_1/\mu, \tilde f- f)$ is independent of $h$, and $|R(\delta)| \leq C\delta. $ 
\end{prop}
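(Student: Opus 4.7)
The strategy is to track $\tilde x - x$ across the five regions into which $[-1, p_1+p_2]$ naturally splits -- the two exceptional intervals around $t_1+1$ and $t_2+1$ together with the three complementary ``smooth'' segments -- and to select $\tilde f$ so that the first exceptional interval contributes nothing beyond its right endpoint. I will start from any standard $C^{\infty}$ mollification of $-\sign$ equal to $-\sign$ outside $(-\delta,\delta)$ and bounded by $1$ in absolute value, then tune it by adding a small smooth bump of adjustable amplitude to enforce $J(-a_1/\mu,\tilde f-f)=0$ in the notation of Proposition~\ref{Prop41}. Because this is a single linear constraint on an infinite-dimensional family of admissible perturbations, such a $\tilde f$ exists.

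The routine parts come first. On the initial segment $[-1,\,t_1+1+\theta_+]$ the delayed argument $x(s-1)$ always satisfies $x(s-1)\ge\delta$, so $f(x(s-1))=\tilde f(x(s-1))=-1$ and uniqueness yields $\tilde x=x$, giving the first half of part (a). For parts (b) and (c) I apply variation of constants on the two exceptional intervals starting from their left endpoints where $\tilde x=x$ already holds; since $\theta_--\theta_+<1$ and $\vartheta_+-\vartheta_-<1$ for small $\delta$, the shift $s-1$ lands inside the preceding smooth segment so $\tilde x(s-1)=x(s-1)$ in the integrand. With $|\tilde f-f|\le 2$ and interval length $O(\delta)$ from Proposition~\ref{Prop41}a, the bounds $|\tilde x-x|\le C\delta$ are immediate.

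The key step is evaluating the endpoint jumps using Proposition~\ref{Prop41}b. Applied to the first exceptional interval with $t_0=t_1$ and $\alpha=-a_1/\mu$, the substitutions in the proof of that proposition reduce the difference $\tilde x(t_1+1+\theta_-)-x(t_1+1+\theta_-)$ to a nonzero multiple of $J(-a_1/\mu,\tilde f-f)$, which vanishes by construction. Hence $\tilde x=x$ at this point, and on the middle segment $(t_1+1+\theta_-,\,t_2+1+\vartheta_-]$ the delayed argument satisfies $x(s-1)\le-\delta$ so that both ODEs reduce to $\dot y=-\mu y+a(t)$, forcing $\tilde x=x$ there too and completing part (a). The same calculation at $t_0=t_2$ with $\alpha=a_1/\mu$ yields $\tilde x(t_2+1+\vartheta_+)-x(t_2+1+\vartheta_+)=a_2\cdot J(a_1/\mu,\tilde f-f)=R(\delta)$. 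For $t>t_2+1+\vartheta_+$, smallness of $\delta$ and of $|h-h_*|$ together with part (c) keeps both delayed arguments above $\delta$, so each equation reduces to $\dot y=-\mu y-a_2$ and the difference satisfies $\dot z=-\mu z$ with $z(t_2+1+\vartheta_+)=R(\delta)$, giving the exponential form in part (d); the bound $|R(\delta)|\le C\delta$ follows from the explicit formula for $J$ combined with $|\tilde f-f|\le 2$ and an elementary estimate of $\int_{-\delta}^{\delta}(a_1/\mu-u)^{-2}\,du$.

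The main obstacle is the construction of $\tilde f$: one must simultaneously preserve $C^\infty$ smoothness, the bound $|\tilde f|\le 1$, and the integral balance $J(-a_1/\mu,\tilde f-f)=0$. A secondary bookkeeping step is verifying uniformly for $h$ near $h_*$ that all interval inclusions persist (the first exceptional interval lying in $[0,p_1]$, the second in $[p_1,p_1+p_2]$, and $\theta_--\theta_+,\vartheta_+-\vartheta_-<1$), and that the post-exceptional delayed arguments of both solutions remain on the correct side of $\pm\delta$.
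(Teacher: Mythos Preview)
Your overall strategy---partition into five intervals, enforce $J(-a_1/\mu,\tilde f-f)=0$ at the first zero so the solutions reconnect, then read off $R(\delta)=a_2\,J(a_1/\mu,\tilde f-f)$ at the second---is exactly the paper's approach, and your handling of parts (b), (c), (d) matches the paper's variation-of-constants estimates.

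The one place where the paper is more concrete is precisely the point you flag as ``the main obstacle'': producing $\tilde f$ with $|\tilde f|\le 1$ and $J(-a_1/\mu,\tilde f)=J(-a_1/\mu,-\sign)$. Your ``mollification plus adjustable bump'' is not quite enough as stated, because the bump amplitude needed to hit the target value of $J$ is not a priori small, and can push $|\tilde f|$ past $1$. The paper avoids this by taking two admissible smoothings $\tilde f_-,\tilde f_+$ chosen $\mathcal L^1$-close to the constants $-1$ and $+1$ on $[-\delta,\delta]$; then $J(\alpha,-\sign)$ lies between $J(\alpha,\tilde f_-)$ and $J(\alpha,\tilde f_+)$, and the intermediate value theorem applied to the convex combination $s\mapsto (1-s)\tilde f_-+s\tilde f_+$ produces the desired $\tilde f$ while automatically preserving the bound $|\tilde f|\le 1$. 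Your ``single linear constraint on a convex infinite-dimensional family'' remark is morally this same argument, but the convex-combination formulation is what makes it rigorous.

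One minor bookkeeping point: on the middle segment you need $\tilde f(\tilde x(s-1))=+1$, not just $f(x(s-1))=+1$. Since $s-1$ can land in the first exceptional interval $[t_1+1+\theta_+,\,t_1+1+\theta_-]$ where $\tilde x\neq x$, you must use the bound from (b) together with $x\approx x_1<0$ there to conclude $\tilde x(s-1)\le x_1+C\delta<-\delta$ for small $\delta$. The paper makes this step explicit; you allude to it only in your final ``post-exceptional delayed arguments'' remark.
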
 
\begin{proof}  Clearly there exist $ C^{\infty}$ functions $\tilde f$ with the properties in (\ref{smoothf}). 
For  such a function and small enough $\delta >0 $ (with $\delta < x_1$, see Section \ref{subsec3.1}), 
one has $x = \tilde x$ on $[-1, t_1 + \theta_+ +1]$. We want to achieve 
 $\tilde x(t_1 + \theta_- +  1)  = x(t_1 + \theta_ - + 1)$. In view of  (\ref{varconst}), applied with 
$t_0 = t_1 +  \theta_+  +1$, and the fact that $a= a_1$ is constant around $t_1 + 1$, 
 this condition  is equivalent   to
\begin {equation} \int_{t_1 +  \theta_+ +1}^{t_1 + \theta_ - +1} e^{-\mu[t_1 + \theta_- +1 -s]}
 \tilde f(\tilde x(s-1)) \; ds  = \int_{t_1 + \theta_+ +1}^{t_1 + \theta_- +1} e^{-\mu[t_1 + \theta_- +1 -s]}
  f(x(s-1)) \; ds. \label{cond1}\end{equation}
Now on the interval $[t_1 + \theta_+,  t_1 + \theta_-] $ (where $t_1$ depends on $h$),  solutions $x = x^h$  have the form $t \mapsto \alpha + \beta e^{-\mu t}$, where $ \alpha = -a_1/\mu$,  
and only $$  \beta = (a_1/\mu) e^{\mu t_1} = \frac{a_1}{\mu}\cdot \frac{\mu h + a_1}{a_1} = h + a_1/\mu $$   depends on $h$, see Section \ref{subsec3.1}.
The formulas for $\theta_{\pm} $  from part a) of Proposition \ref{Prop41} thus show that 
$\theta_{\pm}$ are independent of $ h$. Further, part b) of the same Proposition shows that condition (\ref{cond1}) 
is equivalent to 
\begin{equation} J(\alpha, \tilde f) = J(\alpha, -\sign).  
  \label{cond2}\end{equation}
The number $ c = J(\alpha, -\sign) $  on the right hand side of equation (\ref{cond2}) is clearly between the numbers 
$J(\mathbf{\alpha,-1})$ and $J(\alpha,\mathbf{+1})$, where $\mathbf{\pm 1}$ denotes the constant functions.  
By choosing $ \tilde f_-, \tilde f_+ $ according to (\ref{smoothf})  close  enough to $\mathbf{-1}$ and 
 $\mathbf{+1}$ in $ \mathcal{L}^1([-\delta, \delta])$,  respectively,  one  can achieve that $ c$ is also between  
$J(\alpha, \tilde f_-)$ and $ J(\alpha,\tilde  f_+)$. It follows then  by the intermediate value theorem 
(using  continuity of $J(\alpha,\cdot) $ with respect to the $\mathcal{L}^1$-norm)   
that  there exists  $s \in [0, 1]$ such that the   function $\tilde f  = (1-s) \tilde f_- + s \tilde f_+$  satisfies condition (\ref{cond2}).

With such an $\tilde f$, one has on the interval  $[t_1+ 1+\theta_+,\;  t_1+ 1+\theta_-]$ that 
\begin{equation} \begin{aligned} |\tilde x(t) - x(t) |  &= |   \int_{t_1 +  \theta_+ +1}^{t} 
\underbrace{e^{-\mu[t -s]}}_{\leq 1} a_1 \cdot \underbrace{[\tilde f(x(s-1)) -  f(x(s-1))]}_{|...| \leq 2}  \, ds | \\ 
& \leq (\theta_- -\theta_+)\cdot 2 |a_1| \leq C_1 \delta  \end{aligned}  \label{C1} \end{equation} 
with appropriate $C_1$, where the last estimate follows from the formulas for $ \theta_{\pm}$  from Proposition \ref{Prop41}.
If $ \delta $ is so small that $x_1 + C_1 \delta < -\delta$ then for all solutions 
$\tilde x$ in question 
$\tilde f(\tilde x(s)) = +1 = f(x(s)) \;  (s \in  [t_1+ 1+ \theta_+, \; t_1 + 1+\theta_-]),$ which together with 
$ \tilde x (t_1+1 + \theta_-) =   x (t_1+1 + \theta_-) $   gives that also 
$x = \tilde x$ on $[t_1+1 + \theta_-, t_1 + 2+ \theta_-]$. This interval may or may not contain the second zero 
$t_2$  of $x$, but in any case we have $ a_1 \tilde f (x(s)) = a_1 f(x(s))   = +a_1 $ for 
$s \in  [t_1+1 + \theta_-, t_2 +\vartheta_-]$. Thus we obtain
$$ x = \tilde x \text{ on } [t_1+1 + \theta_-, t_2 + 1 +\vartheta_-],  $$
and a) is proved. 
 
Clearly, since $t_2 <  p_1 < t_2 + 1$ and  $ \vartheta_{\pm} \to 0 $ as $ \delta \to 0$, we can assume that 
$t_2 + \vartheta_+ < p_1 < t_2 + 1-\vartheta_-$.  
On the interval $[t_1 + 1+ \theta_-, p_1]$, which contains $t_2$ in its interior, and on which $ x = \tilde x$,
 we then have  $x(s) = \alpha_2 + \beta_2e^{-\mu s} = \tilde x(s) $,  with $\alpha_2 = a_1/\mu$  again independent of $h$ and (see Section  \ref{subsec3.1}, and observe $x(t_1 + 1) = x_1$)
$$\begin{aligned} \beta_2 & = (x_1 - a_1/\mu) e^{\mu(t_1 + 1)} = (a_1/\mu)(-2 + e^{-\mu})e^{\mu} \frac{\mu h + a_1}{a_1}  \\ 
& = (1 - 2e^{\mu})\cdot  \frac{\mu h + a_1}{\mu} = (1 - 2e^{\mu})(h + a_1/\mu). 
\end{aligned} $$ It follows as  above for $ \theta_{\pm}$ that  the numbers $\vartheta_{\pm} = \vartheta_{\pm}(\delta)$    are independent of  $h$, and given by the formulas from Proposition \ref{Prop41} with  $\alpha : = \alpha_2 = a_1/\mu$. On the interval  $[t_2 + \vartheta_-, t_2 + \vartheta_+]$, the solutions 
$x$ and $ \tilde x$ coincide and pass through $[-\delta, \delta]$ a second time, but (obviously) in a different way.   
Since $ \tilde f$ is already chosen,   the analogue of condition (\ref{cond2}) with $\alpha_2$  instead of 
$\alpha$ will not hold, so that we do  not obtain $\tilde x(t_2 + \vartheta_+ + 1) = x(t_2 + \vartheta_+ + 1)$.
However, we have $\tilde x(t_2 + \vartheta_- + 1) = x(t_2 + \vartheta_- + 1)$, and  for 
$t \in [t_2 + \vartheta_- +  1, t_2 + \vartheta_+ + 1]$ we have in analogy to  (\ref{C1}) 
$$  \tilde x(t) - x(t)   =   \int_{t_2 +  \vartheta_- +1}^{t}  e^{-\mu(t-s)} a_2 \cdot [\tilde f(x(s-1)) -  f(x(s-1))]   \, ds $$
(recall that $ \tilde x = x$ on $[t_2 + \vartheta_-, t_2 + \vartheta_+]$),   
which gives the  estimate 
$| \tilde x(t) - x(t)|  \leq (\vartheta_+ - \vartheta_-)\cdot 2a_2  \leq C_2\delta,$
with appropriate $C_2$. Assertions b) and c) follow with $C := \max\{C_1, C_2\}$. \\
Further, for  $t =  t_2 + \vartheta_+ + 1$ in particular, we obtain from Proposition \ref{Prop41} b): 
\begin{equation} \begin{aligned}
 & \quad \tilde x( t_2 + \vartheta_+ + 1) - x( t_2 + \vartheta_+ + 1) \\
&   =   \int_{t_2 +  \theta_- +1}^{ t_2 + \vartheta_+ + 1}  e^{-\mu( t_2 + \vartheta_+ + 1 -s)} a_2 \cdot [\tilde f(x(s-1)) -  f(x(s-1))]   \, ds \\
& = a_2 \cdot  J(\alpha_2, \tilde f- f), \quad \text{ where } \alpha_2 = a_1/\mu. 
\end{aligned} \label{Rdelta}  \end{equation}
Obviously the last quantity, which we abbreviate by $R(\delta)$,  is independent of  the `initial value' $h$
and  satisfies  $|R(\delta)| \leq C_2\delta  \leq C\delta. $

Ad d):  From (\ref{varconst}) and (\ref{Rdelta})  we see that  for $t \in   [t_2 + \vartheta_+ + 1, p_1+p_2]$
$$\tilde x(t) - x(t) = e^{-\mu[t-(t_2 + \vartheta_+ + 1)]}R(\delta)  +  
 \int_{t_2 +  \vartheta_+ +1}^{t}  e^{-\mu(t-s)} a_2 \cdot [\tilde f(\tilde x(s-1)) -  f(x(s-1))]   \, ds,
$$
Now for $h$ close to $h_*$ and $\delta$ sufficiently small, both  terms $\tilde f(\tilde x(s-1)) $ and 
$ f(x(s-1))$ in this integral equal $-1$,  since the periodic solution $x^{h_*}$  decreases from its maximal value at 
$t_2(h_*) + 1 $ to  the still positive value $h_* $ on $[t_2(h_*), p_1 + p_2]$, and the difference  
$\tilde x(t) - x(t) $ equals the first summand in the above expression as long as 
also $\tilde x(s-1) \geq \delta$,  which is true if  $ h_* + R(\delta)  >  \delta $ and $|h-h_*|$ is sufficiently small. We thus obtain the assertion of d).
\end{proof} 

\subsection{Smoothing the coefficient function  $a$  to $ \tilde a$}\label{subsec43}

\begin{prop} \label{Prop43}
For sufficiently small $ \rho >0$,  the step function $a$  with the two positive values $ a_1, a_2$ can be replaced by  a  function $\tilde  a:\R \to \R $  which is of class $C^{\infty} $, also has period $p_1 + p_2$,  and  the following properties: 

\medskip  \noindent a) $\tilde  a = a = a_1 \text{ on } [\rho, p_1], \quad 
\tilde  a = a = a_2 \text{ on } [p_1+ \rho, p_1 + p_2], $ hence also $\tilde  a = a = a_2$ on $[-p_2 + \rho, 0]$. 

\medskip  \noindent b) $  0 <  \tilde a \leq 2 \max\{a_1, a_2\} $ on the `transition intervals' $[0, \rho]$ and $[p_1, p_1 + \rho]$. 

\medskip  \noindent c) For $ h $ close to $ h_*$, the solutions $\tilde{\tilde x} = x^h(\cdot, \tilde a, \tilde f)$ and $\tilde x = x^h( \cdot, a, \tilde f) $ as in  Proposition \ref{Prop42}  satisfy   
$$ \tilde{\tilde x} = \tilde x \quad \text{ on } [0, p_1 + p_2]\setminus([0, \rho] \cup [p_1, p_1 + \rho]),$$ 
 and  on the `exceptional' intervals  $[0, \rho]$ and $[p_1, p_1 + \rho]$ one has 
$$|\tilde{\tilde x} - \tilde x| \leq 2 \max\{a_1, a_2\} \rho.$$  
\end{prop}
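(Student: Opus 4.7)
The plan is to construct $\tilde a$ explicitly as a smooth perturbation of $a$ on the two transition intervals, fine-tuned so that the solutions $\tilde x$ and $\tilde{\tilde x}$ coincide exactly at the right endpoint of each transition; the equality outside the transitions then follows from uniqueness, and the estimate inside is immediate from variation of constants. Concretely, I would fix a $C^\infty$ cutoff $\chi: \R \to [0,1]$ with $\chi(t) = 0$ for $t \leq 0$, $\chi(t) = 1$ for $t \geq 1$, and all derivatives vanishing at the endpoints. Outside the two transition intervals I set $\tilde a = a$. On $[0, \rho]$ I start with the template $s \mapsto a_2 + (a_1 - a_2)\chi(s/\rho)$, which matches $a_2$ on $[-p_2 + \rho, 0]$ from the left and $a_1$ on $[\rho, p_1]$ from the right in a $C^\infty$ way; similarly on $[p_1, p_1 + \rho]$ with the roles of $a_1, a_2$ reversed. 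This already produces a $C^\infty$ periodic function satisfying (a) and (b).

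Next I impose the matching conditions that force $\tilde x$ and $\tilde{\tilde x}$ to agree at $\rho$ and at $p_1 + \rho$. During the first transition, the lookback argument $t-1 \in [-1, \rho-1]$ lies in the initial interval, where $\varphi \equiv h > \delta$ and hence $\tilde f(\varphi(t-1)) \equiv -1$ for both solutions; variation of constants then yields
\begin{equation*}
\tilde x(\rho) - \tilde{\tilde x}(\rho) = \int_0^\rho e^{-\mu(\rho-s)}[\tilde a(s) - a_1]\, ds,
\end{equation*}
so equality requires $\int_0^\rho e^{\mu s}[\tilde a(s) - a_1]\,ds = 0$. I would enforce this by replacing the template with $\tilde a + c\psi$, where $\psi$ is a fixed smooth bump compactly supported in $(0, \rho)$ and $c$ is chosen explicitly to cancel the leftover integral. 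One checks that $|c|\cdot\max|\psi|$ is of order $|a_2 - a_1|$, small enough to keep $\tilde a$ within the bound in (b). The matching at the second transition is analogous, using the observation that on $[p_1 - 1, p_1 + \rho - 1]$ the common solution (inherited from equality on $[\rho, p_1]$, see below) lies strictly inside $(t_1, t_2)$ with value below $-\delta$ for $\delta$, $|h - h_*|$, $\rho$ small, so $\tilde f \equiv +1$ there and the matching condition has the same form.

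The equality claim in (c) now follows by a bootstrap argument. Once $\tilde x(\rho) = \tilde{\tilde x}(\rho)$ and $a = \tilde a$ on $[\rho, p_1]$, both solutions satisfy the same DDE there, and on the relevant lookback sub-intervals their $\tilde f$-values coincide: either because both come from the common $\varphi$ on $[-1,0]$, or because both solutions lie in $\{|x| > \delta\}$ on $[0, \rho]$ (where $\tilde f$ is constant), or by induction once they have been shown to coincide on some earlier sub-interval of $[\rho, p_1]$. Uniqueness of solutions to the resulting ODE on successive sub-intervals of length $\leq 1$ then gives $\tilde x \equiv \tilde{\tilde x}$ on $[\rho, p_1]$, and the same argument handles $[p_1 + \rho, p_1 + p_2]$. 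The estimate on the transition intervals is immediate:
\begin{equation*}
|\tilde x(t) - \tilde{\tilde x}(t)| \leq \int_0^t e^{-\mu(t-s)} |a(s) - \tilde a(s)|\cdot |\tilde f|\, ds \leq 2\max\{a_1, a_2\}\,\rho,
\end{equation*}
using (b) and $|\tilde f| \leq 1$. The main obstacle is arranging the two matching conditions simultaneously while preserving (b); the second matching in particular is subtle because it relies on the outcome of the first, through the fact that $\tilde f$ takes a constant value on the pertinent lookback interval.
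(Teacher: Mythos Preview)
Your strategy is the same as the paper's: on each transition interval you impose the integral matching condition that comes from variation of constants (using that $\tilde f\equiv -1$ on the first lookback and $\tilde f\equiv +1$ on the second), so that $\tilde{\tilde x}$ and $\tilde x$ agree at the right endpoint; equality on the remaining intervals then follows by uniqueness, and the estimate on the transition intervals drops out of the integral formula. The paper carries out the two transitions sequentially (first producing an intermediate $\hat a$ on $[0,\rho]$, then $\tilde a$ on $[p_1,p_1+\rho]$) and simply asserts that a $C^\infty$ profile with the required weighted integral and with values in $(0,\,2\max\{a_1,a_2\}]$ exists, whereas you give an explicit template-plus-bump construction.

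There is one small gap in your construction. Your claim that the correction $c\psi$ is ``of order $|a_2-a_1|$, small enough to keep $\tilde a$ within the bound in (b)'' is not automatic: the template already ranges over $[\min(a_1,a_2),\max(a_1,a_2)]$, and a perturbation of size comparable to $|a_2-a_1|$ can drive $\tilde a$ below zero whenever $\max(a_1,a_2)/\min(a_1,a_2)>2$ (for instance $a_1=2$, $a_2=0.1$ from Table~\ref{tab:soln_sp}). The fix is easy: make the template's transition from $a_2$ to $a_1$ occur on a short sub-interval $[0,\epsilon\rho]$, so the residual integral to be cancelled is $O(\epsilon\,|a_2-a_1|\,\rho)$ and the required $|c|$ is $O(\epsilon\,|a_2-a_1|)$, hence as small as desired; equivalently, run an intermediate-value argument over a one-parameter family of transitions between a profile that is mostly $a_1$ and one that is mostly $a_2$. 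This is in effect what the paper's existence assertion encodes. With that adjustment your proof is complete and coincides with the paper's.
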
 
\begin{proof} We first replace $ a $  on   an interval  $[0, \rho]$   with $ \rho > 0 $ small
by a $C^{\infty}$ function $ \hat a$ with $\hat a (0) = a_2, \hat a(\rho) = a_1$ (taking $  \rho < t_1(h_*)$).    
The condition for the corresponding solution $\hat x = x^h(\cdot, \hat a, \tilde f)$ to satisfy 
$ \hat x(\rho) = \tilde x(\rho)$ is 
$$ \begin{aligned} \int_0^\rho \hat a(s) e^{-\mu(\rho-s)}\cdot (-1)  \, ds  & = \int_0^\rho  a(s)  e^{-\mu(\rho-s)}\cdot (-1)  \, ds, \text{ or } \\
 \int_0^\rho \hat a(s) e^{-\mu(\rho-s)} \, ds  & = a_1 \int_0^\rho  e^{-\mu(\rho-s)} \, ds. \end{aligned}$$
This equality can be satisfied,  along with the boundary conditions,   by a function 
$\hat a$ taking values in $ (0, a_2]$ if $ a_1 < a_2$, and with values in $(0, 2a_1]$ if $a_2 < a_1$, so in any case  
with values in $(0, 2\max\{a_1, a_2\}]$. With such a choice, we obtain for $ t \in [0, \rho]$ 
$$|\hat x(t) - x(t) | \leq  2\max\{a_1, a_2\}\int_0^t e^{-\mu(t-s} \, ds \leq 2\max\{a_1, a_2\}\rho, $$
and $ \hat x(\rho) = \tilde x(\rho) $. For small enough $\rho$  we conclude that 
$\tilde f(\hat x(t)) = \tilde f(\tilde x(t)) = -1 $ for $t \in [0, \rho] $, implying $\hat x = \tilde x $ 
on $[\rho, p_1 + p_2]$, where $ \hat a = a$.

In a similar second step we modify $\hat a$ to $\tilde a$ on the interval $[p_1, p_1 + \rho]$ 
(choosing $ \rho >0$ such that $p_1 + \rho < t_2(h_*) + 1$) to a  $C^{\infty}$ function $  \tilde a$ 
with period $p_1 + p_2$,  with $\tilde a(p_1) = a_1, \tilde a(p_1 + \rho) = a_2$, and $\tilde a = \hat a$  
  on $ [0, p_1 + p_2]\setminus [p_1, p_1 + \rho]$. Again this can be done with 
$\tilde a $ taking values in $(0,  2\max\{a_1, a_2\}]$ and such that the solutions 
$\hat x  = x^h(\cdot, \hat a, \tilde f) $  (as above)  and $\tilde{\tilde x} := x^h(\cdot, \tilde a, \tilde f) $
with  the same value $h$ close to $ h_*$ at  time zero  satisfy 
$\hat x(p_1 + \rho) = \tilde{\tilde x}(p_1 + \rho)$. As above, then 
for  $\rho$  small enough (so that   $\tilde{\tilde x} \geq \delta$ on $ [p_1, p_1 + \delta]$), this gives $\tilde{\tilde x} = \hat x$ on 
$[p_1 + \rho, p_1 + p_2]$.

\end{proof} 

\begin{cor}\label{Cor44} 
For the equation $\dot x(t) = - \mu x(t) + \tilde a(t) \tilde f(x(t-1))$, with 
$\tilde a$ and $ \tilde f$ as  above,  and  for $ h$ close to $  h_*$, the map 
$\tilde F:   h = x(0) \mapsto x(p_1 + p_2) = \tilde F(h) $ is given 
by $$\tilde F(h) = F(h) + \eta(h,  \delta)R(\delta)  = mh + b + \eta(h, \delta)R(\delta), \quad \text{ with }  $$
$$\eta(h, \delta) := \exp[\mu( 1 - (p_1 + p_2))](2e^{\mu} -1) \frac{a_1 +\mu h }{a_1 - \mu \delta },$$   
with $R$ from Proposition \ref{Prop42}, and $M, b$ as in Section \ref{subsec3.1}.
\end{cor}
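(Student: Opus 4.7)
The plan is to chain Proposition~\ref{Prop42} and Proposition~\ref{Prop43} to express $\tilde F(h) - F(h)$ in closed form, and then evaluate the resulting exponential factor using the explicit expressions for $t_1$, $t_2$, and $\vartheta_+$ derived in Section~\ref{subsec3.1} and Proposition~\ref{Prop41}.

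First I would apply Proposition~\ref{Prop42}(d) at the endpoint $t = p_1+p_2$ (which lies in $[t_2+\vartheta_++1,\, p_1+p_2]$ for small $\delta$ and $h$ near $h_*$) to obtain
$$\tilde x(p_1+p_2) - x(p_1+p_2) = e^{-\mu[p_1+p_2 - t_2 - \vartheta_+ - 1]} R(\delta).$$
Since the endpoint $p_1+p_2$ lies outside both transition intervals $[0,\rho]$ and $[p_1, p_1+\rho]$ of the coefficient $\tilde a$, Proposition~\ref{Prop43}(c) yields the \emph{exact} equality $\tilde{\tilde x}(p_1+p_2) = \tilde x(p_1+p_2)$. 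Combined with $x(p_1+p_2) = F(h) = mh + b$, this gives $\tilde F(h) = mh + b + e^{-\mu[p_1+p_2 - t_2 - \vartheta_+ - 1]} R(\delta)$, and it remains to verify that the exponential factor equals $\eta(h,\delta)$.

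Next I would compute the factor $e^{\mu(t_2 + \vartheta_+ + 1 - (p_1+p_2))}$. From Section~\ref{subsec3.1} one has $e^{\mu t_1} = (a_1 + \mu h)/a_1$, and substituting the formula $x_1 = -(a_1/\mu)(1 - e^{-\mu})$ into the expression for $t_2$ yields $e^{\mu(t_2 - t_1 - 1)} = 2 - e^{-\mu}$, so that $e^{\mu t_2} = (2e^{\mu} - 1)(a_1 + \mu h)/a_1$. Since near the second zero the solution has the form $\alpha_2 + \beta_2 e^{-\mu t}$ with $\alpha_2 = a_1/\mu$ (as recorded in the proof of Proposition~\ref{Prop42}), Proposition~\ref{Prop41}(a) gives $e^{\mu \vartheta_+} = a_1/(a_1 - \mu\delta)$. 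Multiplying these identities and pulling out the factor $e^{\mu(1 - (p_1+p_2))}$ reproduces the expression $\eta(h,\delta)$ claimed in the statement.

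The one point requiring real care is the exact (not merely $O(\rho)$) equality $\tilde{\tilde x}(p_1+p_2) = \tilde x(p_1+p_2)$. This is built into the construction of $\tilde a$ in Proposition~\ref{Prop43}: the $C^{\infty}$ modifications on the two transition intervals are chosen so that the solutions coincide at the right endpoints $\rho$ and $p_1 + \rho$; past $p_1 + \rho$ the coefficient $\tilde a$ agrees with $a$, and the common value of $\tilde f$ on the relevant range of solution values is constant, so equality propagates to $p_1 + p_2$ via the variation-of-constants formula. The remainder of the argument is an algebraic identity assembled from previously derived formulas, with no further obstacles.
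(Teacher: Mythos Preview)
Your proposal is correct and follows essentially the same route as the paper: combine Proposition~\ref{Prop43}(c) (exact coincidence of $\tilde{\tilde x}$ and $\tilde x$ at $p_1+p_2$) with Proposition~\ref{Prop42}(d), and then evaluate the exponential factor via the explicit formulas $e^{\mu t_2} = (2e^{\mu}-1)(a_1+\mu h)/a_1$ and $e^{\mu\vartheta_+} = a_1/(a_1-\mu\delta)$. The only cosmetic difference is that the paper computes the algebraic identities for $t_2$ and $\vartheta_+$ first and then applies the two propositions, whereas you apply the propositions first and compute afterwards.
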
 
\begin{proof} 
The formulas for $t_1, x_1$ and $t_2 $ from Section  \ref{subsec3.1} give 
$x_1 = \frac{a_1}{\mu}(e^{-\mu} -1)$ and
$$ \begin{aligned} t_2 = t_2(h)  & = t_1(h) + 1 + \frac{1}{\mu}\ln[\frac{a_1 - \mu \frac{a_1}{\mu}(e^{-\mu}-1)}{a_1}] = 
t_1(h)  + 1  + \frac{1}{\mu}\ln(2-e^{-\mu}) \\
& = 1 + \frac{1}{\mu}[\ln(\frac{a_1 + \mu h }{a_1 })  + \ln(2-e^{-\mu}) ] \\
& = 1 +  \frac{1}{\mu}\ln[(2-e^{-\mu}) \frac{a_1 + \mu h}{a_1}]. \end{aligned} $$ 
Thus $\displaystyle e^{\mu t_2(h)}  = e^{\mu}(2-e^{-\mu})\frac{a_1 + \mu h}{a_1} = (2e^{\mu} -1)\frac{a_1 + \mu h}{a_1}.$ 
Further note that from the construction of $\vartheta_+(\delta)$ and Proposition \ref{Prop41}, 
$$  e^{\mu\vartheta_+(\delta)}   = \frac{1}{1 - \delta/\alpha_2}  = \frac{1}{1 - \mu\delta/a_1} = 
  \frac{a_1 }{a_1 - \mu \delta }.$$ 
 Combining  part c) of Proposition \ref{Prop43}
and part d) of  Proposition \ref{Prop42} we see that 
$$\begin{aligned} \tilde F(h) & = \tilde{\tilde x}(p_1 + p_2) = x(p_1 + p_2) + \exp\{-\mu[p_1 + p_2-(t_2(h)+ \vartheta_+(\delta) +1)]\}  R(\delta) \\
& =  F(h) + \exp[\mu( 1 - (p_1 + p_2))]  e^{\mu t_2(h)}  e^{\mu\vartheta_+(\delta)} 
  R(\delta) \\
& =   F(h) +  \exp[\mu( 1 - (p_1 + p_2))](2e^{\mu} -1)\frac{a_1 + \mu h}{a_1} \frac{a_1 }{a_1 - \mu \delta }
 R(\delta) \\
& = F(h) + \exp[\mu( 1 - (p_1 + p_2))](2e^{\mu} -1) \frac{a_1 +\mu h }{a_1 - \mu \delta }R(\delta) \\
& = F(h) + \eta(h,\delta) R(\delta). 
\end{aligned} $$ 
\end{proof} 

With the smoothing as in Corollary \ref{Cor44}, we  obtain the following theorem for the equation 
$$ \dot x(t) = - \mu x(t) + \tilde a(t) \tilde f(x(t-1)) \leqno{(\tilde a, \tilde f)}  $$
\begin{thm}\label{Thm45}  For a smoothed equation $ (\tilde a, \tilde f)$ as in Corollary \ref{Cor44} and $ \delta >0$  small enough,  the map  $\tilde F  $   a has a unique fixed point $\tilde h$ close to $h_*$, with 
$$ \tilde h \to h_*  \; \text{ and }\;  \tilde F'(\tilde h) \to 
 F'(h_*) = m  \; \text{ as } \delta \to 0. $$ To this fixed point corresponds a periodic solution $\tilde p: \R \to \R  $  of 
equation $ (\tilde a, \tilde f)$ with (minimal) period  $p_1 +p_2$. 
\end{thm}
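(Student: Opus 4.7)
The plan is to split the theorem into two steps: first solve the scalar equation $\tilde{F}(h) = h$ for $\tilde{h}$ with the stated limit properties, then promote this one-dimensional fixed point to a genuine $T$-periodic solution $\tilde{p}$ of the smoothed DDE.

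The first step is essentially algebraic. Inspection of the formula for $\eta(h,\delta)$ in Corollary~\ref{Cor44} shows that $\eta$ is affine in $h$; writing $\eta(h,\delta) = c_0(\delta) + c_1(\delta)\, h$ with $c_0, c_1$ continuous in $\delta$ at $0$, one obtains $\tilde{F}(h) = \tilde{m}\, h + \tilde{b}$ with $\tilde{m} := m + c_1(\delta) R(\delta)$ and $\tilde{b} := b + c_0(\delta) R(\delta)$. Because $|R(\delta)| \leq C\delta \to 0$, the hypothesis $|m| < 1$ from Proposition~\ref{3.1} yields $|\tilde{m}| < 1$ for all sufficiently small $\delta$, so $\tilde{F}$ is an affine contraction with unique fixed point $\tilde{h} = \tilde{b}/(1-\tilde{m})$; the limits $\tilde{h} \to h_* = b/(1-m)$ and $\tilde{F}'(\tilde{h}) = \tilde{m} \to m$ as $\delta \to 0$ are immediate.

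For the second step, I take $\varphi \equiv \tilde{h}$ as the initial function (valid because $\tilde{h} > \delta$ for $\delta$ small) and let $\tilde{x} = x^\varphi(\cdot, \tilde{a}, \tilde{f})$. The crucial observation is that for any $\psi \in C^0([-1,0],\R)$ with $\psi(0) = \tilde{h}$ and $\psi \geq \delta$ on $[-1,0]$, the solution $x^\psi(\cdot, \tilde{a}, \tilde{f})$ restricted to $[0,T]$ is determined by $\tilde{h}$ alone: on $[0,1]$ one has $\tilde{f}(\psi(s-1)) = -1$ by \myref{smoothf}, collapsing the variation-of-constants formula \myref{varconst} to an expression in $\tilde{h}$, and Propositions~\ref{Prop42} and~\ref{Prop43} propagate this $\psi$-independence through the remaining zero-crossing and transition windows on $[1,T]$, while simultaneously providing $|\tilde{x}(t) - p(t)| \leq C\delta$, so in particular $\tilde{x}_T \geq \delta$ on $[-1,0]$. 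Corollary~\ref{Cor44} and the fixed-point property then give $\tilde{x}(T) = \tilde{F}(\tilde{h}) = \tilde{h}$. Letting $\tilde{y}$ denote the solution starting from $\tilde{x}_T$ at time zero, the $T$-periodicity of $\tilde{a}$ and forward uniqueness yield $\tilde{y}(s) = \tilde{x}(T+s)$, while the ``only $\tilde{h}$ matters'' property applied to $\tilde{x}_T$ forces $\tilde{y}(s) = \tilde{x}(s)$ for $s \in [0,T]$. Hence $\tilde{x}(T+s) = \tilde{x}(s)$ on $[0,T]$, and periodic extension defines $\tilde{p}:\R\to\R$. Minimality of $p_1+p_2$ as the period follows because a $(T/k)$-periodic solution ($k\geq 2$), when substituted back into the equation, would force $\tilde{a}(t+T/k) \equiv \tilde{a}(t)$ wherever $\tilde{f}(\tilde{p}(t-1))\neq 0$, which is impossible once $a_1\neq a_2$.

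The main obstacle is the ``only $\tilde{h}$ matters'' claim. A rigorous justification requires careful bookkeeping to ensure that the narrow windows where $\tilde{x}(s-1)\in(-\delta,\delta)$ and the transition intervals of $\tilde{a}$ are mutually disjoint and small enough that no residual $\psi$-dependence leaks into the integrand of \myref{varconst} outside the already-estimated exceptional sets of Propositions~\ref{Prop42} and~\ref{Prop43}. I would invoke those propositions as black boxes rather than reprove their content.
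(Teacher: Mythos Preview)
Your argument is correct and follows the same line as the paper's (very brief) proof: both deduce the fixed point of $\tilde F$ from the closeness of $\tilde F$ to $F$ furnished by Corollary~\ref{Cor44} together with $R(\delta)\to 0$. Your observation that $\eta(h,\delta)$ is affine in $h$, so that $\tilde F$ is itself exactly affine with slope $\tilde m = m + c_1(\delta)R(\delta)\to m$, is a clean sharpening of the paper's generic ``$C^1$-convergence'' formulation, and your Step~2 and the minimality discussion spell out what the paper leaves to ``follows from the construction''.
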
 
\begin{proof}  The proof follows from the construction and from the fact that in a neighborhood of $ h_*$, 
the map $ \tilde F$ converges to the affine linear map $F$ in the $C^1$-norm, as one sees from the 
description of their  difference in  Corollary \ref{Cor44} and the property 
$ R(h) \to 0\;  (\delta \to 0) $ from Proposition \ref{Prop42}. 
 \end{proof}

\subsection{ Computation of spectral properties of the smoothed period   map} 
\label{subsec44}

 Consider  a smoothed equation $(\tilde a, \tilde f) $ as in Theorem \ref{Thm45} with $ \delta $  small enough so that 
the corresponding periodic solution $  \tilde p $ exists.  
(We do generally not denote the dependence  of  objects on $ \delta$ or other details of the smoothing.)
Note  that the initial state  $\tilde\varphi:= 
\tilde p_0$ of  this periodic solution  is given by $p_0 = \tilde {\tilde x}_{p_1 + p_2}$, where as above, 
$\tilde {\tilde x} = x^{\tilde h}(\cdot, \tilde a, \tilde f) $, and one can take the constant function equal to 
$\tilde h$   as initial  state for $\tilde {\tilde x}$. 
We briefly write $C^0$  for  the space $C^0([-1, 0], \R)$ with the norm given by\\ 
$$||\varphi||_{\infty} = \max\menge{|\varphi(t)|}{t \in [-1, 0]},$$
and define the  evaluation map  $ \text{ev}_0: \varphi \in C^0([-1, 0], \R) \to \R, \; \text{ev}_0(\varphi) := \varphi(0)$.
Set  $$D^+ := \menge{\varphi \in C^0}{ \varphi([-1, 0]) \subset [\delta, \infty) }.  $$  

For initial functions  $\varphi \in D^+$,  (and with the `starting time' $t_0 = 0$)
the values of the  corresponding solution $x^{\varphi}$ on $[0, 1]$  (the first interval of delay length) depend  only  on $h :=  \varphi(0) = \text{ev}_0(\varphi) $, and hence the same is true for the solution on 
all of $[0, \infty)$.  From Theorem \ref{Thm45}, the fixed point $\tilde h > 0$ of $\tilde F$ (close to $h_*$)  
exists. 

Set $ T := p_1 + p_2$, and  define the period  map $ P(T,0): C^0 \ni \varphi \mapsto x^{\varphi}(\cdot, \tilde a, \tilde f)_T \in C^0$, which is the forward evolution operator  for  the nonautonomous equation from  starting time $0$ to the period  $T$.  The fact that $\tilde a$ and $ \tilde f$ are of class $C^1$ (for $ \tilde a$, continuity would be sufficient) implies that $ P$ is of class $C^1$ w.r. to the norm $||\; ||_{\infty}$ 
(compare, e.g., Theorem 4.1, p. 46 in \cite{Hale77}). 
 Due to the nonautonomous character of the equation, the starting time  is also important here,
 not only the time difference $T$.    Analogous time evolution operators $ P(t, s) $ can be defined for all $ t, s \in \R$, which then have the cocycle property $P(t,s)\circ  P(s, u) = P(t, u) $ (if $  t \geq s \geq u$).  $T$-periodicity of the  equation  implies that
\begin{equation} P(nT, 0) = [P(T,0)] ^n \; (n \in \N_0). \label{Piterate}\end{equation} 
 We   mostly write $P$ for  $P(T,0)$ from now on. It follows from \myref{Piterate} that 
fixed points of $P$ give periodic solutions of the equation, and that iteration of $P$
describes the solution behavior also for non-periodic  solutions. In particular, 
$$ P( \tilde \varphi) = \tilde \varphi. $$
For an appropriate open interval $I\subset \R $   containing $\tilde h$, and the set 
$\tilde D^+ := \menge{\varphi \in D}{ \varphi(0) \in I}$ 
 we have a commutative diagram 
$$\resizebox{4.5cm}{!}{
\xymatrix{
\tilde D^+ \ar[r]^{P}  \ar[d]_{\text{ev}_0} & D^+ \ar[d]^{\text{ev}_0}  \\
I \ar[r]^{ \tilde F} & \R   
}
}
$$
Now, for  $ h \in I$,  consider the constant  functions $h\cdot \mathbf{1}  \in C^0$, where  $ \mathbf{1} \in C^0 $ is the constant function with value 1.  Define 
$\Gamma: I \ni h \mapsto  P (h\cdot \mathbf{1}) \in C$; then $\Gamma: I   \to C^0$ is a   $C^1$-curve
(we do not denote the dependence of $ \Gamma$ on $ \delta $.) 
The fact that forward solutions  depend only on the value of the initial  segment at $t = 0$   
implies that   for a whole neighborhood  $U$ in $C^0$ of the initial  segment $\tilde \varphi$ of the periodic solution (the fixed point of $P$), one has 
\begin{equation} P(U) \subset \text{image}(\Gamma), \label{intoGamma} \end{equation}
and we have
$$\Gamma(\tilde h ) =   P [ \tilde h \cdot \mathbf{1}] =   P(\tilde \varphi)   = \tilde \varphi.$$
We know  (assuming $ \delta $ small enough) that $ \lambda :=  \tilde F'(\tilde h) \approx m$   is not zero. 
Note that the map $I\ni h \mapsto \text{ev}_0(h\cdot  \mathbf{1})$ is the identity on $I$, and hence we have 
$$ \tilde F'(\tilde h) = \frac{d}{dh}\resr{\tilde h} \{ \tilde F\circ ev_0 \circ [h \mapsto h\cdot  \mathbf{1}] \}.$$ 

Using the  the above diagram  we obtain 
\begin{equation}  \begin{aligned} 0 \neq \lambda = \tilde F'(\tilde h)  &= \frac{d}{dh}\resr{h = \tilde h}\{\tilde F \circ \text{ev}_0 \circ [h \mapsto h\cdot \mathbf{1}]\}\\
&= \frac{d}{dh}\resr{\tilde h}  \left\{ \text{ev}_0 \circ P \circ [h \mapsto h\cdot \mathbf{1}]\right\} \\
& = \frac{\rm d}{\rm{d}h}\resr{h = \tilde h}  [\text{ev}_0\circ \Gamma]  = \text{ev}_0[\Gamma'(\tilde h)].
\end{aligned} \label{ev0Gamma'} \end{equation}
 Thus, the tangent vector   $  v := \Gamma'(\tilde h) \in C^0 $ is nonzero, and even  $v(0)\neq 0$. 
 It follows that   there exist an open  interval $I^*$   around $\tilde h$   such that 
 $W^* := \text{image}(\Gamma\resr{I^*})  $ is a one-dimensional $C^1$-submanifold of $C^0$, containing  $\tilde \varphi$.  
A neighborhood of $\tilde \varphi$ in $C^0$  is mapped into  $W^*$ by $P$. Therefore, 
 with the one dimensional tangent  space $  \mathcal{T} :=  T_{\tilde \varphi}W^* = \R\cdot  \Gamma'(\tilde\varphi) 
 = \R\cdot v$ and the  derivative $ M := DP(\tilde\varphi) $  one has 
$\text{image} (M) \subset  \mathcal{T}$.
We conclude that  the kernel $ \mathcal{N}$ of $M$ is a closed subspace of $ C^0$ with codimension at most one,  and $M \mathcal{T} \subset \mathcal{T}$, so that $ v$  is an eigenvector of $M$. 
(We show next that the corresponding eigenvalue equals $ \lambda$, which comes as no surprise.)  
,,Differentiating'' the above diagram at $  \tilde \varphi$ into the direction of
$v$,  we obtain 
$$\begin{aligned} (Mv)(0)  & = \text{ev}_0(M(v)) = 
\text{ev}_0(DP(\tilde\varphi) (v)) \\
&= \tilde F'(\text{ev}_0( \tilde \varphi))\cdot \text{ev}_0(v)  = 
 \tilde F'(\tilde h) \cdot \text{ev}_0(v) = \lambda \cdot v(0).
 \end{aligned} $$
  Since $v(0) \neq 0   $, and we know  already that $v$ is an eigenvector of $M$, the associated eigenvalue equals $ \lambda$.  Summing up, we  obtain the following  result: 

\begin{thm} \label{Thm46}
Assume that for the original discontinuous equation the map $F$ defined by $  F(h) = mh + b$  as in Section \ref{subsec3.1} has a fixed point $h_* >0$. Then, for  smoothed  equations $(\tilde a, \tilde f) $ as above, 
if $ \delta > 0 $ is small enough, there exist a unique periodic solution $\tilde p$ with period $  T = p_1 + p_2$ 
initial state $P_0 = \tilde \varphi$, where $\varphi(0) = \tilde h$ is the unique fixed point of the associated one-dimensional  
map $\tilde F$. The corresponding period map $P = P(T,0) $ with $P(\tilde \varphi) = \tilde \varphi$ maps a 
neighborhood of $ \tilde \varphi$ in  $  C^0$ into a one-dimensional submanifold of $  C^0 $. 
For its derivative $M =DP(\tilde\varphi)$, we have the   $M-$invariant decomposition 
$$C^0 = \mathcal{N} \oplus \mathcal{T}$$ 
corresponding to the eigenvalues  $ 0$  and $  \lambda$ of $M$. 
Further, $ \lambda \to m$ as $ \delta \to 0$, so, in particular,  the periodic  solution 
$\tilde p$ is stable  for equation $(\tilde a, \tilde f) $ if $ m < 1$,  and $\delta $ is small enough. 
\end{thm}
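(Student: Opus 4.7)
The plan is to assemble the machinery already developed in Sections \ref{subsec42}--\ref{subsec44} into a single statement about the period map $P = P(T,0)$. Existence of $\tilde p$, the convergence $\tilde h \to h_*$, and $\tilde F'(\tilde h) \to m$ are delivered by Theorem \ref{Thm45}, so what remains is to lift the one-dimensional spectral information about $\tilde F$ to the infinite-dimensional map $P$ acting on $C^0 = C^0([-1,0],\R)$. The key mechanism is that for initial functions $\varphi$ with $\varphi([-1,0])\subset [\delta,\infty)$, the forward solution of $(\tilde a,\tilde f)$ depends only on $h:=\varphi(0)$, since $\tilde f$ is constant ($=-1$) on $[\delta,\infty)$. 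Consequently the curve $\Gamma: I\ni h \mapsto P(h\cdot \mathbf{1})\in C^0$ parametrizes the image of $P$ near $\tilde\varphi$.

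The first step is to show that $\Gamma$ is an immersion at $\tilde h$. Applying $\text{ev}_0$ to the commutative diagram $\text{ev}_0\circ P = \tilde F\circ \text{ev}_0$ along the family $h\mapsto h\cdot \mathbf{1}$ yields $\text{ev}_0(\Gamma'(\tilde h)) = \tilde F'(\tilde h) = \lambda$, which is nonzero once $\delta$ is small (since $\lambda \to m \neq 0$). Hence $v := \Gamma'(\tilde h)$ is a nonzero tangent vector with $v(0)\neq 0$, and $W^* := \Gamma(I^*)$ is a one-dimensional $C^1$-submanifold of $C^0$ through $\tilde\varphi$. The $C^1$-regularity of $\Gamma$ (and of $P$ itself) rests on the standard smooth dependence of solutions of delay equations on initial data in the $C^0$-norm; this is precisely the point at which we need $\tilde f\in C^1$, which is why the smoothing of Sections \ref{subsec42}--\ref{subsec43} was carried out (see, e.g., Theorem 4.1 in \cite{Hale77}).

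Next I would argue the spectral splitting. Since $P$ maps a neighborhood of $\tilde\varphi$ into $W^*$, differentiation gives $\text{image}(M)\subset \mathcal{T}:=\R\cdot v$ for $M := DP(\tilde\varphi)$, whence $\mathcal{N}:=\ker M$ has codimension at most one in $C^0$. Differentiating $\text{ev}_0\circ P = \tilde F\circ \text{ev}_0$ at $\tilde\varphi$ in the direction $v$ yields $(Mv)(0) = \tilde F'(\tilde h)\, v(0) = \lambda\, v(0)$; combined with $Mv\in \R\cdot v$ and $v(0)\neq 0$, this forces $Mv = \lambda v$. Since $\lambda\neq 0$, one has $\mathcal{T}\cap \mathcal{N} = \{0\}$, and together with $\operatorname{codim}\mathcal{N}\le 1$ this yields the $M$-invariant direct sum $C^0 = \mathcal{N}\oplus \mathcal{T}$ with associated eigenvalues $0$ and $\lambda$.

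Finally, the stability conclusion follows: if $|m|<1$, then $|\lambda|<1$ for $\delta$ small, and since $\{0,\lambda\}$ is the full nonempty part of the spectrum of $M$, the spectral radius of $M$ on $C^0$ is $|\lambda|<1$. The principle of linearized stability for periodic solutions of delay equations then gives asymptotic stability of $\tilde p$. The main obstacles are (i) securing $C^1$-dependence of $P$ on initial data in the $C^0$-norm, which is the whole purpose of the prior smoothing construction, and (ii) establishing $\text{image}(M)\subset \mathcal{T}$ rigorously, which reduces to the observation that on a $C^0$-neighborhood of $\tilde\varphi$ solutions starting from $\varphi$ agree with those starting from $\varphi(0)\cdot \mathbf{1}$, both pushed forward by one period $T$.
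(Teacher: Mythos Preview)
Your proposal is correct and follows essentially the same route as the paper: the argument preceding Theorem \ref{Thm46} in Section \ref{subsec44} defines the curve $\Gamma$, uses the commutative diagram to get $\text{ev}_0(\Gamma'(\tilde h)) = \lambda \neq 0$, deduces that $P$ maps a neighborhood of $\tilde\varphi$ into the one-dimensional manifold $W^*$, and then reads off $Mv = \lambda v$ and the invariant splitting exactly as you do. Your write-up is in fact slightly more explicit than the paper's about why $\mathcal{N}\oplus\mathcal{T} = C^0$ (via $\lambda\neq 0$ and $\operatorname{codim}\mathcal{N}\le 1$) and about invoking linearized stability, but the ideas are identical.
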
 

\medskip\textbf{Remarks on $C^1$-small perturbations of the smoothed equation}. 
If $ m \neq 1$, it also follows  from  standard $C^1$-perturbation theory for hyperbolic fixed points that,  for equations  with data $ C^1$-close to $\tilde a $ and $ \tilde f$, there will exist a  unique nearby  periodic solution (the initial value of which is a fixed point of the perturbed period map) with analogous stability properties. We do not prove this in detail;  one would need a non-autonomous (here: periodic) statement of the type `nearby equations generate $ C^1-$ close  period  maps, with nearby fixed points', i.e., 
a non-autonomous version  of Theorem 1.8,  p.  20  from \cite{L-W95}. 
The robustness of the fixed point  of the period map $P$ (and of  the spectral properties
 with of $M = DP(\varphi)$) are then a very special case of  perturbation arguments for  hyperbolic sets for  non-invertible mappings, e.g., Theorem 6.6 from \cite{L-W95.2}. 

\medskip \textbf{Remarks on  the double period solutions.} Finally, we sketch  how analogous considerations apply to the solutions with double period from Section \ref{subsec3.2}: In this case, for  the smoothed equation  the period map $P$ on positive initial functions corresponds to a  one-dimensional map $ \tilde \Phi_1$  which is $C^1$-close to $\Phi_1$. One has to prove then  that  $P$ on  negative initial functions  corresponds to a  one-dimensional map $ \tilde \Phi_2$  which is $C^1$-close to $\Phi_2$. (This is not immediate from the first part, since the property that solutions 
of smooth and non-smooth equation coincide except for  small intervals gets lost after the second crossing of  zero.) 
Then  the second iterate $  P^2 = P(2T, T) \circ P(T, 0) =  P(T, 0) \circ P(T, 0) $  corresponds to $\tilde \Phi_2 \circ \tilde \Phi_1$, which is $C^1$ close 
to $ \Phi_2 \circ \Phi_1 $.  Now, for general $h$, one has in   view of  
(\ref{x3}):
$$ (\Phi_2 \circ \Phi_1)(h) = \Phi_2(kh + d) = -\Phi_1(-(kh + d)) = -[k\cdot(-(kh + d)) + d] = k^2h +kd-d. $$ 
Note that $h_*$ is a fixed point of $ \Phi_2\circ \Phi_1$, and the derivative there equals $k^2$. 
Then  existence and stability properties of this  fixed point transfer to the map 
$P^2$ for  the smoothed equation as described for the single period solutions   in Section \ref{Sec4}.

\medskip The above  considerations are  similar to the ones  in connection with  the dynamics generated by  autonomous delay equations with smoothed step functions,  as studied  in \cite{Wal81b}, \cite{HaleLin}, and \cite{L-W95}.
In particular, the  argument  here leading to Theorem \ref{Thm46} is similar to  the one from \cite{L-W95}, p. 31-34.


\begin{acknowledgement}
This collaborative research project was initiated during A.I.'s visit and stay at the Justus-Liebig Universit\"{a}t in Giessen, Germany, 
under a support from the Alexander von Humboldt Stiftung (June-August 2023).  It was further advanced at the mathematical research 
institute MATRIX in Australia during the workshop ``Delay Differential Equations and Their Applications'' held in December 2023
(https://www.matrix-inst.org.au/events/delay-differential-equations-and-their-applications/). 
\end{acknowledgement}

%
%
%


\end{document}